\newtheorem{thm}{Theorem}[section]
\newtheorem{cor}[thm]{Corollary}
\newtheorem{prop}[thm]{Proposition}
\newtheorem{lem}[thm]{Lemma}
\newcommand{\be}{\begin{equation}}
\newcommand{\ee}{\end{equation}}
\newcommand{\ben}{\begin{enumerate}}
\newcommand{\een}{\end{enumerate}}
\newcommand{\beq}{\begin{eqnarray}}
\newcommand{\eeq}{\end{eqnarray}}
\newcommand{\beqn}{\begin{eqnarray*}}
\newcommand{\eeqn}{\end{eqnarray*}}
\newcommand{\pa}{\partial}
\newcommand{\g}{{\bf g}}
\newcommand{\pxi}{ {\pa \over \pa x^i}}
\title{Some Properties of  $m$-th Root Finsler Metrics}
\author{A. Tayebi, A. Nankali and E. Peyghan}
\begin{document}

\maketitle
\begin{abstract}
In this paper, we prove that every $m$-th  root metric with isotropic mean Berwald curvature reduces to a weakly Berwald metric. Then we show that an $m$-th  root metric with isotropic mean Landsberg curvature is a weakly Landsberg metric. We find necessary and sufficient condition under which  conformal $\beta$-change of an $m$-th  root metric be locally dually flat. Finally,  we prove that the conformal $\beta$-change of  locally projectively flat $m$-th root  metrics are locally Minkowskian.\\\\
{\bf {Keywords}}: Conformal change, $m$-th  root metric,  $\beta$-change, Locally dually flat metric, projectively flat metric.\footnote{ 2000 Mathematics subject Classification:
53C60, 53C25.}
\end{abstract}

\section{Introduction}
Let $(M, F)$ be a Finsler manifold of dimension $n$, $TM$ its tangent bundle and $(x^{i},y^{i})$ the
coordinates in a local chart on $TM$. Let $F$ be the following function on $%
M,$ by $F=\sqrt[m]{A}$, where $A$ is given by $A:=a_{i_{1}\dots i_{m}}(x)y^{i_{1}}y^{i_{2}}\dots y^{i_{m}}$ with $a_{i_{1}\dots i_{m}}$ symmetric in all its indices \cite{Mangalia}\cite{MatShim}\cite{Shim}\cite{TN1}\cite{TN2}. Then $F$ is called an $m$-th root Finsler metric. The theory of $m$-th root metric has been developed by Shimada \cite{Shim}, and applied to Biology as an ecological metric \cite{AIM}. It is regarded as a direct generalization of Riemannian metric in a sense, i.e., the second root metric is a Riemannian metric.

Let $(M, F)$ be a Finsler manifold of dimension $n$. Denote by $\tau(x,y)$ the
distortion of the Minkowski norm $F_{x}$ on $T_{x}M_{0}$, let
$\sigma(t)$ be the geodesic with $\sigma(0)=x$ and
$\dot{\sigma}(0)=y$. The rate of change of  $\tau(x,y)$ along Finslerian geodesics $\sigma(t)$ called $S$-curvature.
$F$ is said to have isotropic $S$-curvature and almost isotropic $S$-curvature  if ${\bf S}=(n+1)cF$ and ${\bf S}=(n+1)cF+dh$, respectively,  where $c=c(x)$ and $h=h(x)$ are  scalar functions on $M$ and $dh=h_{x^i}(x)y^i$ is the differential of $h$ \cite{TPSa}. Taking twice vertical covariant derivatives  of the $S$-curvature gives rise the $E$-curvature.  The Finsler metric $F$ is called weakly Berwald metric if ${\bf E}=0$ and is said to have isotropic mean Berwald curvature if ${\bf E}=\frac{n+1}{2}cF{\bf h}$, where $c=c(x)$ is a scalar function on $M$ and ${\bf h}=h_{ij}dx^idx^j$ is the angular metric.

\begin{thm}\label{mainthm3}
Let $F=\sqrt[m]{A}$ be an m-th root Finsler metric on an open subset
$U\subseteq \mathbb{R}^n$.\\
(i) For a scalar function $c=c(x)$  on $M$, the following are equivalent:
\begin{description}
\item[](ia)\ \  ${\bf S}=(n+1)cF+\eta$;
\item[](ib)\ \  ${\bf S}=\eta$.
\end{description}
(ii) For a scalar function $c=c(x)$  on $M$, the following are equivalent:
\begin{description}
\item[](iia)\ \ ${\bf E}=\frac{n+1}{2} c F{\bf h}$;
\item[](iib)\ \ ${\bf E}=0$.
\end{description}
\end{thm}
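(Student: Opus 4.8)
The plan is to extract the key structural fact about $m$-th root metrics: the function $A = a_{i_1\dots i_m}(x)y^{i_1}\cdots y^{i_m}$ is a \emph{polynomial} in the fibre variables $y^i$, homogeneous of degree $m$. Consequently $F^m = A$ is polynomial in $y$, and any geometric quantity built from $F$ by rational operations and $y$-derivatives is a rational function in $y$ whose numerator and denominator are polynomials in $y$ with coefficients depending only on $x$. I would first record explicit formulas for the quantities entering $\mathbf{S}$ and $\mathbf{E}$ in terms of $A$: writing $A_i = \partial A/\partial y^i$, $A_{ij}=\partial^2 A/\partial y^i\partial y^j$, etc., one has $y_i = F F_{y^i} = \tfrac1m A^{\frac{2}{m}-1} A_i$, and the Finsler metric tensor $g_{ij}$, its determinant, and the Cartan tensor all have the form $A^{r}\times(\text{polynomial in }y)$ for suitable rational exponents $r$. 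The $S$-curvature is $\mathbf{S} = \tfrac{\partial G^i}{\partial y^i} - y^i\frac{\partial}{\partial x^i}\bigl(\ln\sqrt{\det g_{jk}}\bigr)$, and a short computation shows $\det g_{jk}$ is $A$ to a rational power times a polynomial in $y$; hence $\mathbf{S}$ itself is a rational function of $y$ with polynomial numerator and denominator, the denominator being a power of $A$.

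Next I would treat part (i). The hypothesis (ia) reads $\mathbf{S} - \eta = (n+1)cF$, i.e. $\mathbf{S}-\eta = (n+1)c\,A^{1/m}$. The left-hand side, by the previous paragraph, is a rational function of $y$ whose value, after clearing denominators, is a polynomial in $y$ divided by a power of $A$; in particular $(\mathbf{S}-\eta)\cdot A^{N}$ is a polynomial in $y$ for $N$ large. But the right-hand side times $A^N$ equals $(n+1)c\,A^{N+1/m}$, which is \emph{not} a polynomial in $y$ unless $c=0$: $A^{1/m}=F$ is irrational in $y$ (it is an algebraic, non-polynomial function of the $y^i$, since $A$ is not a perfect $m$-th power of a polynomial for a generic — and here arbitrary — symmetric tensor $a_{i_1\dots i_m}$; this is where I expect the one genuine subtlety). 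Comparing the two sides as functions of $y$ on the cone $A>0$ forces $c(x)\equiv 0$, whence (ia) collapses to (ib). The converse (ib)$\Rightarrow$(ia) is immediate by taking $c=0$.

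For part (ii), I would argue the same way but now working with $\mathbf{E}=E_{ij}\,dx^i dx^j$, where $E_{ij}=\tfrac12\frac{\partial^2\mathbf{S}}{\partial y^i\partial y^j}$ (up to the standard normalisation), and the angular metric $h_{ij} = F F_{y^iy^j} = g_{ij} - F_{y^i}F_{y^j}$. Since $\mathbf{S}$ is rational in $y$ with denominator a power of $A$, so is each $E_{ij}$; on the other hand $h_{ij}$ contains the factor $A^{2/m-2}$ times polynomials (from $F F_{y^iy^j}$), so $c F h_{ij}$ is $A^{1/m}$ times a rational function with $A$-power denominator. Thus the identity $E_{ij} = \tfrac{n+1}{2}cF h_{ij}$, after multiplying through by a suitable power of $A$, equates a polynomial in $y$ with $c$ times $A^{1/m}$ times a polynomial in $y$; the same irrationality argument as in part (i) forces $c\equiv 0$, i.e. $\mathbf{E}=0$, and again the converse is trivial. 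The main obstacle, as noted, is justifying rigorously that $A^{1/m}$ cannot coincide with a rational function of $y$ on the open cone $\{A>0\}$ unless the coefficient $c(x)$ vanishes; one handles this by noting that such an identity would make $A$ a perfect $m$-th power in the polynomial ring $\mathbb{R}[y^1,\dots,y^n]$ (localised appropriately), contradicting that $A$ is taken to be a general $m$-th root metric, or — if one prefers a pointwise argument — by differentiating in $y$ and examining the order of vanishing along a ray where $A$ has a simple zero. Everything else is bookkeeping with the explicit $m$-th root formulas for $G^i$, $g_{ij}$, and $\det g_{ij}$ already available in the literature cited.
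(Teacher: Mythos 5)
Your proposal is correct and follows essentially the same route as the paper: both establish that $\mathbf{S}$ and $\mathbf{E}$ are rational functions of $y$ (via the rationality of the spray coefficients $G^i=\tfrac{1}{2}(A_{0j}-A_{x^j})A^{ij}$ of an $m$-th root metric) and then conclude $c=0$ by comparing a rational left-hand side with the irrational factor $F=A^{1/m}$ on the right. If anything, you are more careful than the paper on the one delicate point --- why $A^{1/m}$ cannot coincide with a rational function of $y$ --- which the paper simply asserts without justification.
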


Let $(M, F)$ be a Finsler manifold. There are two basic tensors on Finsler manifolds:  fundamental metric tensor ${\bf g}_{y}$ and  the  Cartan torsion $\textbf{C}_y$, which are second and third order derivatives of ${1\over 2} F_x^2$ at  $y\in T_xM_0$, respectively. Taking a trace of Cartan torsion ${\bf C}_y$ give us the mean Cartan torsion ${\bf I}_y$.  The rate of change of the Cartan torsion along Finslerian geodesics, $\textbf{L}_y$ is said to be Landsberg curvature \cite{TP1}\cite{TP2}. Taking a trace of Landsberg curvature ${\bf L}_y$, yields the mean Landsberg curvature ${\bf J}_y$.  $F$ is called isotropic mean Landsberg curvature if ${\bf J}=cF{\bf I}$, where $c=c(x)$ is a scalar function on $M$.

\begin{thm}\label{mainthm4}
Let $(M,F)$ be an non-Riemannian $m$-th root Finsler
manifold. For a scalar function $c=c(x)$  on $M$, the following are equivalent:
\begin{description}
\item[](ia)\ \  ${\bf J}+cF{\bf I}=0$;
\item[](ib)\ \  ${\bf J}=0$.
\end{description}
\end{thm}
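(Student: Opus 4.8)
The plan is to reduce the substantive implication (ia)$\Rightarrow$(ib) to a rationality argument; the converse (ib)$\Rightarrow$(ia) is immediate (take $c=0$, or use $\mathbf J=-F(c\mathbf I)$ below). Work in a local chart and fix $x$, so that $A=A(x,\cdot)$ is a homogeneous polynomial of degree $m$ in $y$; write $A_i=\partial_{y^i}A$, $A_{ij}=\partial_{y^i}\partial_{y^j}A$, $A_{ijk}=\partial_{y^i}\partial_{y^j}\partial_{y^k}A$, and let $A^{ij}$ be the inverse of $A_{ij}$ (nonsingular because $g_{ij}$ is). First I would record the mean Cartan torsion. Since $g_{ij}=\tfrac1m A^{\frac2m-2}h_{ij}$ with $h_{ij}=\tfrac{2-m}{m}A_iA_j+AA_{ij}$, the matrix determinant lemma together with the Euler relations $A_iy^i=mA$ and $A_{ij}y^j=(m-1)A_i$ (hence $A^{ij}A_iA_j=\tfrac{m}{m-1}A$) gives $\det(g_{ij})=\tfrac{1}{m^n(m-1)}A^{\frac{2n}{m}-n}\det(A_{ij})$. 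Because $I_i=\tfrac12\partial_{y^i}\log\det(g_{jk})$, this yields
\[
I_i=\frac{n(2-m)}{2m}\,\frac{A_i}{A}+\frac12\,A^{jk}A_{ijk},
\]
which is a rational function of $y$ (denominator dividing $A\det(A_{jk})$); hence $F\mathbf I=A^{1/m}\cdot(\text{a rational function of }y)$.

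Next I would record the mean Landsberg curvature. A direct computation from $G^i=\tfrac14 g^{il}\big((F^2)_{x^ky^l}y^k-(F^2)_{x^l}\big)$, in which all fractional powers of $A$ cancel against $g^{il}=mA^{1-\frac2m}A^{il}-\tfrac{2-m}{m-1}A^{-\frac2m}y^iy^l$, shows that the spray of an $m$-th root metric is $G^i=\tfrac12 A^{ij}\big(A_{0;j}-A_{;j}\big)$, with $A_{;j}=\partial_{x^j}A$ and $A_{0;j}=(\partial_{x^k}\partial_{y^j}A)y^k$; in particular $G^i$ is rational in $y$. Now use $J_i=g^{jk}L_{ijk}$ with $L_{ijk}=-\tfrac12 y_l\,\partial_{y^i}\partial_{y^j}\partial_{y^k}G^l$ and $y_l=FF_{y^l}=\tfrac1m A^{\frac2m-1}A_l$. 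One computes $g^{jk}y_l=A_lA^{jk}-\tfrac{2-m}{m(m-1)}\tfrac{A_l}{A}y^jy^k$ — again the fractional powers of $A$ cancel — and since $\partial_{y^i}\partial_{y^j}\partial_{y^k}G^l$ is rational in $y$, it follows that $J_i$ is a rational function of $y$ for each fixed $x$.

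Now assume (ia): $J_i=-c(x)FI_i=-c(x)A^{1/m}I_i$ for all $(x,y)$. If $c(x)I_i(x,\cdot)\not\equiv0$ for some $i$ at some $x$, then on the dense open set $\{I_i(x,\cdot)\neq0\}$ we would have $A^{1/m}=-J_i/(c(x)I_i)$, a rational function of $y$; comparing as rational functions, $A(x,\cdot)$ would then be the $m$-th power of a polynomial, forcing $F(x,\cdot)$ to be a linear form and hence $g_{ij}(x,\cdot)=\tfrac12\partial_{y^i}\partial_{y^j}F^2$ to have rank at most $1$ — impossible, since $g_{ij}$ is positive definite and $\dim M\ge2$. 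Therefore $c\mathbf I\equiv0$ on $M$, and hence $\mathbf J=-F(c\mathbf I)=0$, which is (ib); at points where $F$ is non-Riemannian one has moreover $\mathbf I\neq0$ by Deicke's theorem, so $c=0$ there. (Equivalently, $1$ and $A^{1/m}$ are linearly independent over the field of rational functions of $y$, so $J_i\cdot1+cI_i\cdot A^{1/m}=0$ forces $J_i=0$ and $cI_i=0$.) The main obstacle is entirely in the two preparatory steps — the determinant identity producing the manifestly rational form of $\mathbf I$, and the cancellation of fractional powers of $A$ that makes $\mathbf J$ rational; both reduce to manipulations with the Euler relations for $A$ and are partly available in the cited $m$-th root literature. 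Once they are in place, the irrationality of $A^{1/m}$ finishes the argument in one line.
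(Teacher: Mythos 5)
Your proposal is correct and follows essentially the same route as the paper: both arguments show that $\mathbf{J}$ and $\mathbf{I}$ are rational functions of $y$ while $F=A^{1/m}$ is irrational, and then split the identity $\mathbf{J}+cF\mathbf{I}=0$ using the independence of $1$ and $A^{1/m}$ over the rational functions in $y$. The only cosmetic difference is in the degenerate case: you exclude ``$A^{1/m}$ rational'' via the perfect-power/rank-one argument, whereas the paper shows the alternative would force the Cartan tensor to vanish and then invokes Deicke's theorem; both uses rest on the non-Riemannian hypothesis.
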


There are two important transformation in Finsler geometry: conformal change and $\beta$-change. Two metric functions $F$ and $\bar{F}$ on a manifold $M$ are called  conformal if the
length of an arbitrary vector in the one is proportional to the length in the
other, that is if $\bar{g_{ij}}=\varphi g_{ij}$. The length of vector $\varepsilon$ means here the fact that
$\varphi g_{ij}$, as well as $g_{ij}$, must be Finsler metric tensor. He showed that $\varphi$ falls into
a point function.  A change of Finsler metric $F\rightarrow \bar F$ is called a $\beta$-change of $F$, if $\bar F(x, y)=F(x, y)+\beta(x,y)$, where $\beta(x,y)=b_i(x)y^i$ is a $1$-form on a smooth manifold $M$. It is easy to see that,  if $\sup_{F(x,y)=1}|b_i(x)y^i|<1$, then $\bar F$ is again a Finsler metric.  The notion of a $\beta$-change has been proposed by Matsumoto, named by Hashiguchi-Ichijy\={o}  and studied in detail by Shibata \cite{HI}\cite{Mat}\cite{Shib}. If the Finsler metric $F$ reduces to a  Riemannian metric then ${\bar F}$ reduces to a Randers metric. Due to this reason, the $\beta$-change  has been called the Randers change of Finsler metric, also.

Let $(M,F)$ be a Finsler manifold. In this paper,  we are consider the conformal $\beta$-chenges of Finsler metrics
\[
\bar{F}=e^{\alpha(x)}F+\beta,
\]
where $\beta(x,y)=b_i(x)y^i$ is a $1$-form on a smooth manifold $M$ and $\alpha=\alpha(x)$ is  the conformal factor. It is easy to see that,  if $\sup_{F(x,y)=1}||\beta||<1$,  then $\bar F$ is again a Finsler metric.

Let  $F=\sqrt[m]{A}$ be an $m$-th root Finsler metric on an open
subset $U\subset \mathbb{R}^n$. Put
\[
A_{i}={\pa A\over \pa y^i}, \ \  A_{ij}={\pa^2 A\over \pa y^j\pa y^j}, \ \  A_{x^i}=\frac{\partial A}{\partial x^i}, \ \  A_0=A_{x^i}y^i.
\]
Suppose that $A_{ij}$ define a positive definite tensor and $A^{ij}$ denotes its inverse. The the following hold
\begin{eqnarray*}
&&g_{ij}=\frac{A^\frac{2}{m}-2}{m^2}[mAA_{ij}+(2-m)A_{i}A_{j}],\\
&&g^{ij}=A^{-\frac{2}{m}}[mAA^{ij}+\frac{m-2}{m-1}y^{i}y^{j}],\\
&&y^{i}A_{i}=mA,\ \ y^{i}A_{ij}=(m-1)A_{j},\ \
A^{ij}A_{i}=\frac{1}{m-1}y^{j},\\
&&y_{i}=\frac{1}{m}A^{\frac{2}{m}-1}A_{i},\
\ A_{i}A_{j}A^{ij}=\frac{m}{m-1}A.
\end{eqnarray*}

In \cite{am},  Amari-Nagaoka introduced the notion of dually flat
Riemannian metrics when they study the information geometry on
Riemannian manifolds.  A Finsler metric $F$ on an open subset
$U\subset \mathbb{R}^n$ is called dually flat if it satisfies
$(F^2)_{x^ky^l}y^k=2(F^2)_{x^l}$ \cite{shen}\cite{TPSa}.

In this paper, we consider conformal $\beta$-change of locally dually flat $m$-th
root Finsler metrics and prove the following.
\begin{thm}\label{mainthm1}
Let  $F=\sqrt[m]{A}$ be an $m$-th root Finsler metric on an open
subset $U\subset \mathbb{R}^n$, where $A$ is irreducible. Suppose
that $\bar{F}=e^\alpha F+\beta$ be conformal $\beta$-change of $F$
where $\beta=b_i(x)y^i$ and $\alpha=\alpha(x)$. Then $\bar{F}$ is
locally dually flat if and only if there exists a 1-form $\theta =
\theta_{l} (x)y^l$  on U such that the following hold
\begin{eqnarray}
&&\beta_{0l}\beta +\beta_l\beta_0=2\beta\beta _{x^{l}},\label{SRR1}\\
&&A_{x^l}=\frac{1}{3m}[mA\theta_{l}+2\theta
A_{l}+2(\alpha_{0}A_{l}-\alpha_{x^{l}}A)],\label{SRR2}\\
&&\beta[(\frac{1}{m}-2)A_lA^{-1}A_0 -4A_{x^{l}}+
\alpha_{0}A_{l}]+2[A_l\beta_0+(A_{0}\beta)_{l}] =-2me^\alpha A\Psi,\qquad
\end{eqnarray}
where $\beta _{0l}=\beta _{x^k
y^l}y^k$, $\alpha_{0}=\alpha_{x^{l}}y^{l}$,
$\beta_{x^l}=(b_i)_{x^l}y^i$, $\beta_0=\beta_{x^l}y^i$, $\beta_{0l}=(b_l)_0$ and  $\Psi=\alpha_0\beta_l+\beta_{0l}-2\beta_{x^{l}}-2\alpha_{x^l}\beta$.
\end{thm}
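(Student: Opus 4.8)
We sketch a proof. The plan is to convert the dual flatness of $\bar F$ into a single polynomial identity and then exploit the irreducibility of $A$ to split it. First I would set $L:=\bar F^{2}$ and $H:=L_{x^{k}}y^{k}$; differentiating $H$ in $y^{l}$ and using $\partial y^{k}/\partial y^{l}=\delta^{k}_{l}$ gives $L_{x^{k}y^{l}}y^{k}=H_{y^{l}}-L_{x^{l}}$, so that the defining relation $(\bar F^{2})_{x^{k}y^{l}}y^{k}=2(\bar F^{2})_{x^{l}}$ is equivalent to
\[
H_{y^{l}}=3L_{x^{l}}.
\]
Now expand $L=e^{2\alpha}A^{2/m}+2e^{\alpha}A^{1/m}\beta+\beta^{2}$ (recall $F^{2}=A^{2/m}$) and compute $L_{x^{l}}$, then $H=L_{x^{k}}y^{k}$, then $H_{y^{l}}$, using $\partial_{y^{l}}A^{r}=rA^{r-1}A_{l}$, $\partial_{y^{l}}\beta=b_{l}$, $\partial_{y^{l}}\beta_{0}=\beta_{0l}+\beta_{x^{l}}$, $\partial_{y^{l}}\alpha_{0}=\alpha_{x^{l}}$, $\partial_{y^{l}}A_{0}=A_{x^{k}y^{l}}y^{k}+A_{x^{l}}$, together with the structural identities recorded above ($y^{i}A_{i}=mA$, $y^{i}A_{ij}=(m-1)A_{j}$, $A^{ij}A_{i}=\tfrac{1}{m-1}y^{j}$, etc.).

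The outcome is a single identity $H_{y^{l}}-3L_{x^{l}}=0$ in which the terms coming from $e^{2\alpha}A^{2/m}$ carry the factors $A^{2/m},A^{2/m-1},A^{2/m-2}$, those from $2e^{\alpha}A^{1/m}\beta$ carry $A^{1/m},A^{1/m-1}$, and those from $\beta^{2}$ carry $A^{0}$. The key step is separation: multiplying through by $A^{2}$ puts the identity in the shape
\[
e^{2\alpha}P_{2}(x,y)\,A^{2/m}+e^{\alpha}P_{1}(x,y)\,A^{1/m}+P_{0}(x,y)=0,
\]
with $P_{0},P_{1},P_{2}$ polynomial in $y$. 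Since $A$ is irreducible and $m\ge 3$, the binomial $X^{m}-A$ is irreducible over the field $\mathbb{R}(y^{1},\dots,y^{n})$ (the classical criterion applies because an irreducible polynomial is not a proper power), so $1,A^{1/m},A^{2/m}$ are linearly independent over $\mathbb{R}(y)$; hence $P_{2}=P_{1}=P_{0}=0$ separately. The $A^{0}$-component is $A^{2}\bigl(\beta_{l}\beta_{0}+\beta\beta_{0l}-2\beta\beta_{x^{l}}\bigr)=0$, which is precisely (\ref{SRR1}) — equivalently, the function $\beta^{2}$ satisfies the dual-flatness equation.

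Next I would analyse the $A^{2/m}$-component. After the simplifications above, every one of its degree-$2m$ terms is visibly a multiple of $A$ except a single term proportional to $A_{l}A_{0}$; since $A$ is irreducible, $\deg A_{l}=m-1<m=\deg A$, and the coefficient $\tfrac{2}{m}-1$ is nonzero (here $m\ne2$), this forces $A\mid A_{0}$, i.e. $A_{0}=A_{x^{k}}y^{k}=A\,\theta$ for some $1$-form $\theta=\theta_{l}(x)y^{l}$ on $U$. Substituting $A_{0}=A\theta$ and its $y^{l}$-derivative $A_{l}\theta+A\theta_{l}=A_{x^{k}y^{l}}y^{k}+A_{x^{l}}$ back into the $A^{2/m}$-component and dividing by $A$ yields (\ref{SRR2}). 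Finally, I would feed $A_{0}=A\theta$ and (\ref{SRR2}) into the $A^{1/m}$-component and collect terms: the coefficient of the pure $A^{1/m}$ part is exactly $\Psi=\alpha_{0}\beta_{l}+\beta_{0l}-2\beta_{x^{l}}-2\alpha_{x^{l}}\beta$, and after multiplying by a suitable power of $A$ and simplifying one obtains the third displayed equation. For the converse, observe that (\ref{SRR2}) contracted with $y^{l}$ already gives $A_{0}=A\cdot(\text{1-form})$; so, assuming (\ref{SRR1}), (\ref{SRR2}) and the third equation, the computation runs backwards to recover $H_{y^{l}}=3L_{x^{l}}$, i.e. $\bar F$ is locally dually flat.

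The main obstacle is twofold. First, the differentiation step is long and must be organised so that the three groups of terms (powers $A^{2/m-\ast}$, $A^{1/m-\ast}$, $A^{0}$) never get mixed, since the whole argument rests on separating them; it is easy to misplace a term or a factor of $m$. Second, the divisibility step producing $\theta$ is delicate: among the many degree-$2m$ terms of the $A^{2/m}$-component one must recognise that all but one are multiples of $A$, and this — together with the conclusion $A_{0}=A\theta$ — works precisely because $A$ is irreducible of degree $m\ge 3$, which is exactly where the hypothesis on $A$ enters.
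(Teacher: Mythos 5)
Your proposal follows essentially the same route as the paper: the paper proves a generalized version (Theorem \ref{mainthma}, for $F=\sqrt{A^{2/m}+B}$) by expanding $(\bar F^2)_{x^ky^l}y^k-2(\bar F^2)_{x^l}$, separating the result into the $A^{2/m}$-, $A^{1/m}$- and rational components via Lemma \ref{lemp}, and then using irreducibility of $A$ together with $\deg A_l=m-1<\deg A$ to conclude $A_0=\theta A$ and derive the three conditions — exactly your plan specialized to $B=0$. The only notable difference is that you actually justify the separation step (linear independence of $1,A^{1/m},A^{2/m}$ over $\mathbb{R}(y)$ from irreducibility of $X^m-A$), which the paper leaves as the unproved Lemma \ref{lemp}.
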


A Finsler metric is said to be  locally projectively flat if at any
point there is a local coordinate system  in which the geodesics are
straight lines as point sets. It is known that a Finsler metric
$F(x,y)$ on an open domain $ U\subset \mathbb{R}^n$   is  locally
projectively flat  if and only if $G^i= Py^i$, where $P(x,  \lambda
y) = \lambda P(x, y)$, $\lambda >0$ \cite{shLi1}. Finally, we study conformal $\beta$-change of locally projectively flat $m$-th root  metrics and prove the following.

\begin{thm}\label{mainthm2}
Let  $F=\sqrt[m]{A}$ be an $m$-th root Finsler metric on an open
subset $U\subset \mathbb{R}^n$, where $A$ is irreducible. Suppose
that $\bar{F}=e^\alpha F+\beta$ be conformal $\beta$-change of $F$
where $\beta=b_i(x)y^i$ and $\alpha=\alpha(x)$. Then  $\bar{F}$ is
locally projectively flat if and only if it is locally Minkowskian.
\end{thm}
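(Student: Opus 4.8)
The plan is to start from the locally projectively flat condition for $\bar F = e^\alpha F + \beta$ and exploit the algebraic rigidity of $m$-th root metrics, much as in the proof of Theorem \ref{mainthm1}. First I would recall that a $\beta$-change $\bar F = e^\alpha F + \beta$ (which is a special Randers-type perturbation of $F$) is locally projectively flat if and only if its spray coefficients satisfy $\bar G^i = \bar P y^i$ for some positively $1$-homogeneous $\bar P$. The classical splitting of the projective flatness equation for a metric of the form ``base metric plus $1$-form'' forces, on one hand, $e^\alpha F$ to be projectively flat together with a compatibility condition on $\beta$ relative to the geodesic spray of $F$, and on the other hand $\beta$ to be closed (so that, after the conformal rescaling is absorbed, the deformation is effectively a Randers-type change by an exact form). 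I would make this precise by writing out $(\bar F^2)_{x^k y^l} y^k = 2(\bar F^2)_{x^l}$ — no, rather the Rapcsák-type/Hamel equation $\bar F_{x^k y^l} y^k = \bar F_{x^l}$ — and separating the rational part (coming from $A^{1/m}$ and its $x$-derivatives) from the part that is rational in $y$ times $e^\alpha A^{1/m}$.

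The key algebraic input is the irreducibility of $A$: any identity of the form (polynomial in $y$) $+ e^\alpha A^{1/m}\cdot$(rational in $y$) $= 0$ must split, because $A^{1/m}$ is not a rational function of $y$ once $A$ is irreducible and $m\ge 2$ with $A$ not a perfect power — so the ``irrational'' coefficient and the ``rational'' coefficient must each vanish separately. Applying this to the projective flatness equation for $\bar F$, the vanishing of the $e^\alpha A^{1/m}$-coefficient will force $\alpha = \text{const}$ and will make $F$ itself projectively flat; combined with the fact (used already in the $m$-th root literature, e.g. via the structure of $G^i$ for $m$-th root metrics) that a projectively flat $m$-th root metric with $A$ irreducible must in fact have vanishing spray coefficients, i.e. be locally Minkowskian. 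Simultaneously, the vanishing of the rational-in-$y$ coefficient will impose that $\beta$ is parallel (closed and with $b_{i|j}=0$ relative to the now-flat connection of $F$), hence $d\beta = 0$ and, after a coordinate change absorbing the exact $1$-form, $\beta$ contributes nothing to the geodesic spray. Putting these together yields $\bar G^i = 0$ in suitable coordinates, which is exactly the statement that $\bar F$ is locally Minkowskian; the converse is immediate since a locally Minkowskian metric is trivially locally projectively flat ($G^i = 0 = P y^i$).

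The main obstacle I anticipate is the bookkeeping in the separation step: one must carefully expand $\bar F_{x^k y^l} y^k - \bar F_{x^l}$ using the identities for $g_{ij}, y_i, A_i, A_{ij}$ listed in the excerpt, collect all terms proportional to $e^\alpha$ (these come from differentiating $e^\alpha F$, and carry factors $A^{1/m}, A^{1/m-1}, \dots$) versus those proportional to powers of $A$ alone (from $\beta$ and its $x$-derivatives), and then invoke irreducibility of $A$ to conclude each group vanishes. A subsidiary subtlety is justifying that ``$A^{1/m}$ times a nonzero rational function of $y$ cannot be rational'' — this needs $A$ to not be an $m$-th power of a polynomial, which is where irreducibility is used decisively; I would state this as a lemma (or cite the analogous step from \cite{TN1}, \cite{TN2}) rather than reprove it. Once the separation is achieved, the remaining work — extracting $\alpha = \text{const}$, deducing $G^i_F = 0$, and showing the closed $1$-form $\beta$ can be gauged away — is routine and parallels standard arguments for Randers and $\beta$-change metrics.
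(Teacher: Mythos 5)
Your opening frame --- write the Rapcs\'ak/Hamel equation for $\bar F=e^{\alpha}F+\beta$, expand, and use the irreducibility of $A$ to split the resulting identity into pieces carrying different fractional powers of $A$, each of which must vanish separately --- is exactly the strategy of the paper (Lemma \ref{0} combined with Lemma \ref{lemp} inside Proposition \ref{lemb}). The gaps are in what you claim the splitting delivers afterwards.

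First, the splitting does not force $\alpha=\mathrm{const}$. The conformal factor survives inside the resulting PDEs: the relevant equation here is $mA(A_l\alpha_0+A_{0l}-A_{x^l})=(m-1)A_0A_l$, in which $\alpha_0$ is entangled with the $x$-derivatives of $A$, exactly as it is in the condition $A_{x^l}=\frac{1}{3m}[mA\theta_{l}+2\theta A_{l}+2(\alpha_{0}A_{l}-\alpha_{x^{l}}A)]$ of Theorem \ref{mainthm1}. Indeed constancy of $\alpha$ \emph{cannot} be forced, since $e^{\alpha}A^{1/m}=(e^{m\alpha}A)^{1/m}$ shows the conformal factor can always be absorbed into a new, equally irreducible $A$. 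Second, and more seriously, what the splitting actually yields is divisibility $A_0=c\,\theta A$ for some $1$-form $\theta$, and from this $G^i=\theta y^i$: the metric $F$ comes out \emph{Berwald}, not with vanishing spray coefficients. Your claim that a projectively flat $m$-th root metric with irreducible $A$ has $G^i=0$ is false as stated; even in the cleanest case one only gets $G^i=\tfrac12\theta y^i$ via $A^{ij}A_i=\frac{1}{m-1}y^j$. The passage from Berwald to locally Minkowskian is precisely where the paper has to bring in a curvature argument --- Numata's theorem, that a non-Riemannian Berwald metric of scalar flag curvature must have ${\bf K}=0$, after which Berwald together with ${\bf K}=0$ gives locally Minkowskian --- and this flag-curvature input is entirely absent from your outline, so the argument does not close. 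Third, the final ``gauge away the closed $1$-form'' step is not available in Finsler geometry: adding a closed $1$-form is a projective change, not a coordinate change; it alters the spray by the projective factor $\beta_0/(2\bar F)$, and for instance $F(y)+df$ with $f$ non-affine is projectively flat but not Berwald. So closedness (or even exactness) of $\beta$ by itself does not let you conclude $\bar G^i=0$.
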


\bigskip

\section{Preliminaries}

Let $M$ be a n-dimensional $ C^\infty$ manifold. Denote by $T_x M $ the tangent space at $x \in M$, by $TM=\cup _{x \in M} T_x M $ the tangent bundle of $M$ and by $TM_{0} = TM \setminus \{ 0 \}$ the slit tangent bundle. A  Finsler metric on $M$ is a function $ F:TM \rightarrow [0,\infty)$ which has the following properties:
(i) $F$ is $C^\infty$ on $TM_{0}$; (ii) $F$ is positively 1-homogeneous on the fibers of tangent bundle $TM$; (iii) for each $y\in T_xM$, the following quadratic form ${\bf g}_y$ on $T_xM$  is positive definite,
\[
{\bf g}_{y}(u,v):={1 \over 2} \frac{\partial^2}{\partial s \partial t}\left[  F^2 (y+su+tv)\right]|_{s,t=0}, \ \
u,v\in T_xM.
\]
Let  $x\in M$ and $F_x:=F|_{T_xM}$.  To measure the
non-Euclidean feature of $F_x$, define ${\bf C}_y:T_xM\otimes T_xM\otimes
T_xM\rightarrow \mathbb{R}$ by
\[
{\bf C}_{y}(u,v,w):={1 \over 2} \frac{d}{dt}\left[{\bf g}_{y+tw}(u,v)
\right]|_{t=0}, \ \ u,v,w\in T_xM.
\]
The family ${\bf C}:=\{{\bf C}_y\}_{y\in TM_0}$  is called the Cartan torsion. It is well known that {\bf{C}=0} if and
only if $F$ is Riemannian.

\bigskip

Given a Finsler manifold $(M,F)$, then a global vector field $G$ is
induced by $F$ on $TM_0$, which in a standard coordinate $(x^i,y^i)$
for $TM_0$ is given by ${\bf{G}}=y^i {{\partial} \over {\partial x^i}}-2G^i(x,y){{\partial} \over
{\partial y^i}}$, where $G^i(y)$ are local functions on $TM$. $\bf{G}$ is called the  associated spray to $(M,F)$. The projection of an integral curve of $\bf{G}$ is called a geodesic in $M$. In local coordinates, a curve $c(t)$ is a geodesic if and only if its coordinates $(c^i(t))$ satisfy $ \ddot c^i+2G^i(\dot c)=0$.

\bigskip

Define ${\bf B}_y:T_xM\otimes T_xM \otimes T_xM\rightarrow T_xM$ and ${\bf E}_y:T_xM \otimes T_xM\rightarrow \mathbb{R}$ by ${\bf B}_y(u, v, w):=B^i_{\ jkl}(y)u^jv^kw^l{{\partial } \over {\partial x^i}}|_x$, ${\bf E}_y(u,v):=E_{jk}(y)u^jv^k$, where
\[
B^i_{\ jkl}(y):={{\partial^3 G^i} \over {\partial y^j \partial y^k \partial y^l}}(y),\ \ \  E_{jk}(y):={{1}\over{2}}B^m_{\ jkm}(y),
\]
$u=u^i{{\partial } \over {\partial x^i}}|_x$, $v=v^i{{\partial } \over {\partial x^i}}|_x$ and $w=w^i{{\partial } \over {\partial x^i}}|_x$. $\bf B$ and $\bf E$ are called the Berwald curvature and mean Berwald curvature, respectively. A Finsler metric is called a Berwald metric and mean Berwald metric if $\textbf{B}=0$ or ${\bf E}=0$, respectively.

\bigskip

Let
\[ \tau (x, y):=\ln \Big [ { \sqrt{\det \Big ( g_{ij}(x,y) \Big )}\over {\rm Vol} ({\rm B}^n(1)) }
 \cdot {\rm Vol} \Big \{ (y^i) \in \mathbb{R}^n \Big | \ F\Big (y^i \pxi|_x \Big ) < 1 \Big \} \Big ] .\]
$\tau=\tau(x,y)$ is a scalar function on $TM\setminus\{0\}$, which is called  the  distortion.

Let
\[ {\bf S}(x, y):= { d \over dt} \Big [ \tau \Big ( \sigma(t), \dot{\sigma}(t) \Big ) \Big ]_{t=0},
\]
where $\sigma(t)$ is the geodesic with $\sigma(0)=x$ and $\dot{\sigma}(0)=y$.
${\bf S}$ is called the  S-curvature. ${\bf S}$ said to be  {\it isotropic} if there is a scalar functions
$c(x)$ on $M$ such that
\[ {\bf S}(x, y) = (n+1) c(x) F(x, y).\]

\section{ Proof of the Theorem \ref{mainthm3}}
In  local coordinates $(x^i,y^i)$, the vector filed ${\bf G}=y^i\frac{\pa}{\pa x^i}-2G^i\frac{\pa}{\pa y^i}$ is a global vector field on $TM_0$, where $G^i=G^i(x,y)$ are local functions on $TM_0$  given by following
\[
G^i:=\frac{1}{4}g^{il}\Big[ \frac{\partial^2F^2}{\partial x^k \partial y^l}y^k-\frac{\partial F^2}{\partial x^l}\Big],\ \ \ \ y\in T_xM.
\]
By a simple calculation, we have the following.
\begin{lem}\label{lemG}{\rm (\cite{YY})}
\emph{Let $F=\sqrt[m]{A}$  be an $m$-th root Finsler metric on an open subset
$U\subseteq\mathbb{R}^n$. Then the spray coefficients of $F$ are
given by:}
\[
G^i=\frac{1}{2}(A_{0j}-A_{x^j})A^{ij}.
\]
\end{lem}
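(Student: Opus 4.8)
The plan is to compute the spray coefficients directly from their definition $G^i=\frac{1}{4}g^{il}\big[(F^2)_{x^k y^l}y^k-(F^2)_{x^l}\big]$ by substituting $F^2=A^{2/m}$ and then collapsing the result with the algebraic identities listed for $m$-th root metrics. First I would record the two derivatives of $F^2$ that appear. Since $(F^2)_{y^l}=\frac{2}{m}A^{\frac{2}{m}-1}A_l$ and $(F^2)_{x^l}=\frac{2}{m}A^{\frac{2}{m}-1}A_{x^l}$, differentiating the first in $x^k$, using $\partial_{x^k}A_l=A_{x^k y^l}$, and contracting with $y^k$ gives
\[
(F^2)_{x^k y^l}y^k=\frac{2}{m}A^{\frac{2}{m}-1}\Big[\big(\tfrac{2}{m}-1\big)A^{-1}A_0 A_l+A_{0l}\Big],
\]
where $A_0=A_{x^k}y^k$ and $A_{0l}:=A_{x^k y^l}y^k$ (written $A_{0j}$ in the statement after relabeling the summed index). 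Hence the bracket in the definition of $G^i$ equals $\frac{2}{m}A^{\frac{2}{m}-1}T_l$ with $T_l:=\big(\tfrac{2}{m}-1\big)A^{-1}A_0 A_l+A_{0l}-A_{x^l}$.

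Next I would insert the inverse-metric formula $g^{il}=A^{-\frac{2}{m}}\big[mAA^{il}+\frac{m-2}{m-1}y^iy^l\big]$, which reduces the problem to two contractions:
\[
G^i=\frac{1}{2m}A^{-1}\Big[mA\,A^{il}T_l+\tfrac{m-2}{m-1}\,y^i\,(y^lT_l)\Big].
\]
For the scalar contraction I would use the homogeneity relations $y^lA_l=mA$, $y^lA_{x^l}=A_0$, and $y^lA_{0l}=mA_0$ — the last from Euler's theorem applied to $A_{x^k}$, which is homogeneous of degree $m$ in $y$ — to obtain $y^lT_l=A_0$ after the coefficients $(2-m)+m-1$ combine to $1$. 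For the vector contraction I would apply $A^{il}A_l=\frac{1}{m-1}y^i$, giving
\[
A^{il}T_l=-\frac{m-2}{m(m-1)}A^{-1}A_0\,y^i+A^{il}\big(A_{0l}-A_{x^l}\big).
\]

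Finally I would substitute these back. Multiplying the last identity by $mA$ produces a term $-\frac{m-2}{m-1}A_0\,y^i$, which is exactly cancelled by $\frac{m-2}{m-1}y^i(y^lT_l)=\frac{m-2}{m-1}A_0\,y^i$; what remains is $mA\,A^{il}\big(A_{0l}-A_{x^l}\big)$, and the overall prefactor $\frac{1}{2m}A^{-1}$ collapses this to $G^i=\frac{1}{2}A^{il}\big(A_{0l}-A_{x^l}\big)$, which is the asserted formula. The computation is otherwise mechanical; the one step requiring care is the bookkeeping of the two $y^i$-proportional terms so that their cancellation is transparent, since it is precisely this cancellation that removes the $\frac{m-2}{m-1}y^iy^l$ piece of $g^{il}$ and leaves a clean expression in $A^{il}$.
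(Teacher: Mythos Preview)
Your computation is correct. Each step checks out: the derivatives of $F^2=A^{2/m}$, the contraction with $g^{il}$, the Euler-homogeneity identities $y^lA_l=mA$, $y^lA_{0l}=mA_0$, and $A^{il}A_l=\frac{1}{m-1}y^i$, and finally the cancellation of the two $y^i$-proportional terms leaving $G^i=\frac12 A^{il}(A_{0l}-A_{x^l})$.

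As for comparison, the paper does not actually prove this lemma; it merely attributes it to \cite{YY} and prefaces it with ``by a simple calculation.'' Your argument supplies exactly that simple calculation in full, using only the identities for $g^{ij}$, $g_{ij}$, and the Euler relations that the paper records in the Introduction. So there is no alternative approach to contrast with---you have written out what the paper left to a reference.
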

Thus the spray coefficients of an  $m$-th root Finsler metric are rational functions with respect to $y$.

\bigskip

\begin{lem}\label{lem1}
Let $F=\sqrt[m]{A}$  be an $m$-th root Finsler metric on an open subset
$U\subseteq\mathbb{R}^n$. Then the following are equivalent
\begin{description}
\item[a)] ${\bf S}=(n+1)cF+\eta$;
\item[b)] ${\bf S}=\eta$;
\end{description}
where $c=c(x)$ is a scalar function and $\eta=\eta_i(x)y^i$ is a 1-form on $M$.
\end{lem}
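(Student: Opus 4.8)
The plan is to prove the two implications separately. The implication (b)$\Rightarrow$(a) is immediate: take $c\equiv 0$. For (a)$\Rightarrow$(b) the idea is a rationality dichotomy: I will show that for an $m$-th root metric the $S$-curvature ${\bf S}(x,y)$ is always a rational function of $y$, while $(n+1)cF=(n+1)cA^{1/m}$ can only be rational in $y$ when $c$ vanishes; since adding the $1$-form $\eta$ preserves rationality, (a) forces $c\equiv 0$, hence (b).

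To establish rationality of ${\bf S}$, I would compute the distortion $\tau(x,y)=\ln\big(\sqrt{\det(g_{ij})}\,/\,\sigma_F(x)\big)$ explicitly. Starting from $g_{ij}=\tfrac{1}{m^2}A^{2/m-2}[mAA_{ij}+(2-m)A_iA_j]$, factoring out $mA$, applying the matrix determinant lemma to the rank-one perturbation, and using the contraction identity $A_iA_jA^{ij}=\tfrac{m}{m-1}A$ gives
\[
\det(g_{ij})=\frac{A^{\frac{2n}{m}-n}}{m^n(m-1)}\,\det(A_{ij}),
\]
so that $\tau=\tfrac{n(2-m)}{2m}\ln A+\tfrac12\ln\det(A_{ij})+\phi(x)$ for some function $\phi=\phi(x)$. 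Since $A$ and $\det(A_{ij})$ are polynomials in $y$, both $\tau_{x^i}$ and $\tau_{y^i}$ are rational in $y$; combining this with ${\bf S}=y^i\tau_{x^i}-2G^i\tau_{y^i}$ and Lemma~\ref{lemG}, which exhibits $G^i=\tfrac12(A_{0j}-A_{x^j})A^{ij}$ as a rational function of $y$, shows that ${\bf S}$ is a quotient of two polynomials in $y$. (Equivalently, one may use the standard identity ${\bf S}=\partial G^i/\partial y^i-y^i(\ln\sigma_F)_{x^i}$ together with Lemma~\ref{lemG}.)

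Now assume ${\bf S}=(n+1)cF+\eta$. Then $(n+1)c\,A^{1/m}={\bf S}-\eta$ is rational in $y$ on $U$. If $c(x_0)\neq 0$ at some $x_0\in U$, then $A(x_0,\cdot)^{1/m}=p(y)/q(y)$ with $p,q$ polynomials; clearing denominators gives $q^mA=p^m$, and unique factorization in $\mathbb{R}[y^1,\dots,y^n]$ forces $q$ to be constant and $A(x_0,\cdot)$ to be a constant multiple of the $m$-th power of a linear form. But then $(A_{ij})$ has rank one, contradicting the standing positive-definiteness of $(A_{ij})$. Hence $c\equiv 0$ and ${\bf S}=\eta$, which is (b).

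The determinant evaluation and the differentiations are routine; the step I would be most careful with is the last one — guaranteeing that for a genuine $m$-th root Finsler metric $A$ is never (a constant times) an $m$-th power, so that the homogeneous degree-one rational function $(n+1)cF$ is forced to vanish identically.
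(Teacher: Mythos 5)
Your proof is correct and rests on the same dichotomy the paper uses --- ${\bf S}$ is rational in $y$ for an $m$-th root metric, while $(n+1)cF=(n+1)cA^{1/m}$ is not unless $c=0$ --- but your execution of both halves of that dichotomy differs from, and is more solid than, the paper's. For the rationality of ${\bf S}$, the paper argues that the $E$-curvature is rational (via Lemma~\ref{lemG}) and that $E$ is obtained from ${\bf S}$ by two vertical derivatives, ``thus'' ${\bf S}$ is rational; as stated this inference runs in the wrong direction (rationality of a second $y$-derivative does not by itself return rationality of the function). You instead compute the distortion directly, and your determinant evaluation checks out: the matrix determinant lemma applied to $mAA_{ij}+(2-m)A_iA_j$ together with $A_iA_jA^{ij}=\tfrac{m}{m-1}A$ gives $\det(g_{ij})=\tfrac{1}{m^n(m-1)}A^{\frac{2n}{m}-n}\det(A_{ij})$, whence $\tau$ is $\phi(x)$ plus logarithms of polynomials and ${\bf S}=y^i\tau_{x^i}-2G^i\tau_{y^i}$ is rational by Lemma~\ref{lemG}. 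For the second half, the paper simply asserts that $(n+1)cF$ is ``irrational''; you actually prove it, via unique factorization ($q^mA=p^m$ with $\gcd(p,q)=1$ forces $q$ constant and $A$ a constant multiple of an $m$-th power of a linear form) and the observation that this would make $(A_{ij})$ rank one, contradicting positive definiteness (for $n\ge 2$, which is the standing assumption for a Finsler manifold). So your argument buys a complete proof of the two facts the paper takes for granted, at the cost of the explicit distortion computation; the only cosmetic point is that you could shortcut the $\tau$ computation by citing the standard identity ${\bf S}=\tfrac{\partial G^i}{\partial y^i}-y^i(\ln\sigma_F)_{x^i}$, as you note parenthetically.
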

\begin{proof}
By Lemma \ref{lemG}, the $E$-curvature of an $m$-th root  metric is a rational function in $y$. On the other hand, by taking twice vertical covariant derivatives  of the S-curvature, we get  the $E$-curvature. Thus $S$-curvature is a rational function in $y$. Suppose that $F$ has almost isotropic $S$-curvature, $\textbf{S}=(n+1)c(x)F+\eta$, where $c=c(x)$ is a scalar function and $\eta=\eta_i(x)y^i$ is a 1-form on $M$. Then the left hand side of $\textbf{S}-\eta=(n+1)c(x)F$ is a rational function in $y$ while the right hand is irrational function. Thus $c=0$ and ${\bf S}=\eta$.
\end{proof}

\begin{lem}\label{lem2}
Let $F=\sqrt[m]{A}$  be an $m$-th root Finsler metric on an open subset
$U\subseteq\mathbb{R}^n$. Then the following are equivalent
\begin{description}
\item[a)] ${\bf E}=\frac{n+1}{2}cF{\bf h}$;
\item[b)]${\bf E}=0$;
\end{description}
where $c=c(x)$ is a scalar function on $M$.
\end{lem}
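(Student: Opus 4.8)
The plan is to run the same rationality dichotomy that proved Lemma \ref{lem1}. First I would note that the mean Berwald curvature of an $m$-th root metric is a rational function of $y$: by Lemma \ref{lemG} the spray coefficients $G^i=\frac12(A_{0j}-A_{x^j})A^{ij}$ are rational in $y$, because $A^{ij}$ is the inverse of the polynomial matrix $(A_{ij})$, hence a polynomial matrix divided by $\det(A_{ij})$. Consequently $B^i_{\ jkl}=\partial^3 G^i/\partial y^j\partial y^k\partial y^l$ and $E_{jk}=\frac12 B^m_{\ jkm}$ are rational in $y$ as well.

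Next I would express the angular metric $h_{ij}=g_{ij}-F^{-2}y_iy_j$ through $A$. From $y_i=\frac1m A^{2/m-1}A_i$ and $F^2=A^{2/m}$ one gets $F^{-2}y_iy_j=\frac1{m^2}A^{2/m-2}A_iA_j$, and subtracting this from the formula for $g_{ij}$ recorded in the Introduction yields
\[
h_{ij}=\frac{A^{2/m-2}}{m^{2}}\big[\,mAA_{ij}-(m-1)A_iA_j\,\big].
\]
Hence the right-hand side of a), namely $\frac{n+1}{2m^{2}}\,c\,A^{3/m-2}\big[mAA_{ij}-(m-1)A_iA_j\big]$, is a homogeneous polynomial in $y$ times the power $A^{3/m-2}$ of $A$, whose exponent is not an integer for $m\ge 2$, $m\ne 3$; the same holds, with the never-integral exponent in $A^{1/m-2}$, for the homogeneity-matched normalization $\frac{n+1}{2}cF^{-1}{\bf h}$. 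Since $A$ is irreducible of degree $m$ and $A_i$ has degree $m-1<m$, we have $mAA_{ij}-(m-1)A_iA_j\equiv-(m-1)A_iA_j\pmod{A}$, so $A$ does not divide the bracket, and therefore $c\,F{\bf h}$ is a genuinely irrational function of $y$ unless $c\equiv0$.

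Comparing this with $E_{jk}$, which is rational in $y$, the identity a) forces $c=0$, hence ${\bf E}=0$; this is a)$\Rightarrow$b), and b)$\Rightarrow$a) is trivial. The main obstacle is precisely this concluding step: one must be sure that $c\,F{\bf h}$ really is irrational in $y$ when $c\not\equiv0$, which relies on the irreducibility of $A$ to rule out any cancellation of the fractional power $A^{3/m-2}$ against the polynomial factor. For the single borderline root $m=3$, where that power is the integer $-1$, a separate argument is needed, comparing the pole of $F{\bf h}$ along $\{A=0\}$ with the poles of $E_{jk}$ along $\{\det(A_{ij})=0\}$ and again using that $A$ is irreducible.
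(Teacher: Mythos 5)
Your proof follows the same rationality-versus-irrationality dichotomy as the paper, whose own argument simply asserts that the left side of a) is rational in $y$ while the right side is irrational; you supply the justification the paper omits, namely the explicit formula for $h_{ij}$ in terms of $A$ and the irreducibility argument ruling out cancellation of the fractional power of $A$. Your observation that the homogeneity-matched normalization is $\frac{n+1}{2}cF^{-1}{\bf h}$ --- which the paper itself uses later, and for which the exponent $\frac{1}{m}-2$ is never an integer --- disposes of the $m=3$ borderline case you raise, so the argument is complete.
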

\begin{proof}
Suppose that $F=\sqrt[m]{A}$ has isotopic mean Berwald curvature
\[
{\bf E}=\frac{n+1}{2}cF{\bf h},
\]
where $c=c(x)$ is a scalar function on $M$. The left hand side of ${\bf E}=\frac{n+1}{2}cF{\bf h}$ is a rational function in $y$ while the right hand is irrational function. Thus $c=0$ and ${\bf E}=0$.
\end{proof}

\bigskip

\noindent {\it\bf Proof of Theorem \ref{mainthm3}}: By Lemmas \ref{lem1} and \ref{lem2}, we get the proof.
\qed

\bigskip

By the  Theorem \ref{mainthm3}, we have the following:
\begin{cor}
Let $F=\sqrt[m]{A}$  be an $m$-th root Finsler metric on an open subset
$U\subseteq\mathbb{R}^n$. Suppose that $F$ has isotropic $S$-curvature ${\bf S}=(n+1)cF$, for some scalar function $c=c(x)$ on $M$. Then ${\bf S}=0$.
\end{cor}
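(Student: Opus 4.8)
The plan is to obtain the corollary directly from Theorem \ref{mainthm3}(i) by specializing the $1$-form to zero. The hypothesis ${\bf S}=(n+1)cF$ is exactly condition (ia), namely ${\bf S}=(n+1)cF+\eta$, in the degenerate case $\eta=\eta_i(x)y^i\equiv 0$. Applying the equivalence (ia)$\Leftrightarrow$(ib) then yields ${\bf S}=\eta=0$, which is precisely the assertion. Thus the corollary is essentially a one-line consequence once Theorem \ref{mainthm3} (equivalently Lemma \ref{lem1}) is in hand.

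For completeness I would reproduce, in this special situation, the rationality dichotomy that underlies Theorem \ref{mainthm3}. By Lemma \ref{lemG} and the remark following it, the spray coefficients $G^i$ of an $m$-th root metric are rational functions of $y$; consequently the mean Berwald curvature ${\bf E}$, which is built from vertical derivatives of the $G^i$, is rational in $y$, and since ${\bf E}$ is recovered from the $S$-curvature by two vertical covariant derivatives, ${\bf S}$ is itself rational in $y$. On the other hand $F=\sqrt[m]{A}$ is an irrational function of $y$ for a genuine $m$-th root metric, the $m$-th root of the polynomial $A$ not being a rational function of $y$. Writing the hypothesis as ${\bf S}=(n+1)c(x)F$, the left-hand side is then rational in $y$ while the right-hand side is $c(x)$ times an irrational function of $y$; comparing the rational and irrational parts forces $c(x)\equiv 0$, and hence ${\bf S}=0$.

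The only point requiring care, and the sole potential obstacle, is the irrationality of $F$ as a function of $y$: one must know that $(n+1)cF$ cannot coincide with any rational function of $y$ unless $c$ vanishes identically. This is exactly the mechanism already exploited in the proofs of Lemmas \ref{lem1} and \ref{lem2}, so no new ingredient is needed — the fact that $\sqrt[m]{A}$ is irrational in $y$ guarantees that the two sides of ${\bf S}=(n+1)cF$ can match only when $c=0$, and the corollary follows at once.
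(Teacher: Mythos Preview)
Your proposal is correct and follows essentially the same approach as the paper: the paper derives the corollary immediately from Theorem \ref{mainthm3} (specifically part (i) with $\eta=0$), which is precisely your first paragraph. Your additional paragraphs merely unpack the rationality-versus-irrationality argument already contained in Lemma \ref{lem1}, so nothing new is introduced.
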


\bigskip

A Finsler metric $F$ satisfying $F_{x^k}=FF_{y^k}$ is called a Funk metric. The standard Funk metric on the Euclidean unit ball $B^n(1)$ is denoted by $\Theta$ and defined by
\begin{equation}\label{Funk}
  \Theta(x,y):=\frac{\sqrt{|y|^2-(|x|^2|y|^2-<x,y>^2)}+<x,y>}{1-|x|^2}, \,\,\,\,\,y\in T_xB^n(1)\simeq \mathbb{R}^n,\nonumber
\end{equation}
where $<,>$ and $| . |$ denote the Euclidean inner product and norm on $\mathbb{R}^n$, respectively. In  \cite{CS},
Chen-Shen introduce the notion of isotropic Berwald metrics. A Finsler metric $F$ is said to be isotropic Berwald metric if
its Berwald curvature is in the following form
\begin{equation}\label{IBCurve}
B^i_{\ jkl}=c\{F_{y^jy^k}\delta^i_{\ l}+F_{y^ky^l}\delta^i_{\ j}+F_{y^ly^j}\delta^i_{\ k}+F_{y^jy^ky^l}y^i\},
\end{equation}
for some scalar function $c=c(x)$ on $M$. Berwald metrics are trivially isotropic Berwald metrics with $c=0$. Funk metrics are also non-trivial isotropic Berwald metrics. In  (\ref{IBCurve}), putting $i=l$ yields
\[
E_{ij}=\frac{n+1}{2}cF^{-1}h_{ij}.
\]
Plugging it in  (\ref{IBCurve}) implies that
\begin{equation}
B^i_{\ jkl}=\frac{2}{n+1}\{E_{jk}\delta^i_l+E_{kl}\delta^i_j+E_{lj}\delta^i_k+E_{jk,l}y^i\}.
\end{equation}
This means that every  isotropic Berwald metric is a Douglas metric. For the definition of Douglas metrics see \cite{BM}.

Now,  let $F=\sqrt[m]{A}$  be an $m$-th root Finsler metric on an open subset
$U\subseteq\mathbb{R}^n$. Suppose that $F$ has isotropic Berwald curvature (\ref{IBCurve}). By Lemma \ref{lemG}, the left hand side of (\ref{IBCurve}) is a rational function in $y$ while the right hand is irrational function. Thus $c=0$ and we have the following.

\begin{thm}\label{IBM}
Let $F=\sqrt[m]{A}$  be an $m$-th root Finsler metric on an open subset
$U\subseteq\mathbb{R}^n$. Suppose that $F$ has isotropic Berwald curvature. Then $F$ is a Berwald metric.
\end{thm}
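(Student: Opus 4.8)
The plan is to exploit the sharp contrast between the algebraic natures of the two sides of the isotropic Berwald equation (\ref{IBCurve}): for an $m$-th root metric the Berwald curvature is a \emph{rational} function of $y$, whereas the prescribed right-hand side is, whenever $c\neq 0$, an \emph{irrational} function of $y$. Forcing the two to coincide will then kill the coefficient $c$, and $\mathbf{B}=0$ follows at once.

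First I would pin down the left-hand side. By Lemma \ref{lemG} the spray coefficients are $G^i=\frac{1}{2}(A_{0j}-A_{x^j})A^{ij}$. Since $A$ together with its $y$-derivatives $A_j,A_{jk}$ and the term $A_{0j}=A_{x^ky^j}y^k$ are all polynomial in $y$, while the inverse $A^{ij}$ is rational in $y$ (its entries are cofactors of the polynomial matrix $A_{ij}$ divided by $\det(A_{ij})$), each $G^i$ is a rational function of $y$. Every partial $y$-derivative of a rational function is rational, so in particular the Berwald curvature $B^i_{\ jkl}=\partial^3 G^i/\partial y^j\partial y^k\partial y^l$ is rational in $y$.

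Next I would analyze the right-hand side by differentiating $F=A^{1/m}$. One computes $F_{y^j}=\frac{1}{m}A^{\frac{1}{m}-1}A_j$ and then $F_{y^jy^k}=\frac{1}{m^2}A^{\frac{1}{m}-2}\big[mAA_{jk}-(m-1)A_jA_k\big]$; a further differentiation produces $F_{y^jy^ky^l}=A^{\frac{1}{m}-3}$ times a polynomial, i.e.\ the same shape with the power of $A$ lowered by one. Hence each of the factors $F_{y^jy^k}$, $F_{y^ky^l}$, $F_{y^ly^j}$ and $F_{y^jy^ky^l}$ appearing in (\ref{IBCurve}) is the product of the single irrational factor $A^{1/m}$ with a rational function of $y$. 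Consequently the whole bracket in (\ref{IBCurve}) factors as $A^{1/m}$ times a rational tensor $R^i_{\ jkl}(x,y)$, so that the right-hand side equals $c\,A^{1/m}R^i_{\ jkl}$.

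Equating the two expressions gives $B^i_{\ jkl}=c\,A^{1/m}R^i_{\ jkl}$, in which $B^i_{\ jkl}$ and $R^i_{\ jkl}$ are rational and $A^{1/m}$ is not. The main point, and the only step needing care, is the rationality dichotomy that closes the argument: since $A$ is not a perfect $m$-th power (the nondegeneracy built into a genuine $m$-th root metric), $A^{1/m}$ cannot be a rational function of $y$, while $R^i_{\ jkl}$ is not identically zero (its trace reproduces the angular-metric term in $E_{ij}=\frac{n+1}{2}cF^{-1}h_{ij}$). If $c\neq 0$ we could then solve $A^{1/m}=B^i_{\ jkl}/(c\,R^i_{\ jkl})$ and exhibit $A^{1/m}$ as rational, a contradiction. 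Therefore $c=0$, whence $B^i_{\ jkl}=0$ and $F$ is a Berwald metric.
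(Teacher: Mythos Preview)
Your argument is correct and follows exactly the same line as the paper's: by Lemma \ref{lemG} the Berwald curvature is rational in $y$, while the right-hand side of (\ref{IBCurve}) carries the irrational factor $A^{1/m}$, forcing $c=0$. The paper dispatches this in a single sentence; your version simply spells out the computation of $F_{y^jy^k}$ and the rationality dichotomy in more detail.
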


In \cite{TR}, Tayebi-Rafie Rad proved that every isotropic Berwald metric (\ref{IBCurve}) on a manifold $M$ has isotopic $S$-curvature ${\bf S}=(n+1)cF$, for some scalar function $c=c(x)$ on $M$. Thus by the Theorem \ref{IBM}, we have the following.
\begin{cor}
Let $F=\sqrt[m]{A}$  be an $m$-th root Finsler metric on an open subset
$U\subseteq\mathbb{R}^n$. Suppose that $F$ has isotropic Berwald curvature. Then ${\bf S}=0$.
\end{cor}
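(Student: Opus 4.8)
The plan is to combine the quoted theorem of Tayebi-Rafie Rad \cite{TR} with Theorem \ref{IBM}. First I would invoke \cite{TR}: since $F=\sqrt[m]{A}$ is assumed to have isotropic Berwald curvature of the form (\ref{IBCurve}) with scalar $c=c(x)$, it automatically has isotropic $S$-curvature ${\bf S}=(n+1)cF$ with the \emph{same} scalar $c$. This is the single input I would take as given.

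Next I would recall that Theorem \ref{IBM} has already shown that an $m$-th root metric with isotropic Berwald curvature is in fact a Berwald metric; more precisely, its proof shows that the scalar $c$ in (\ref{IBCurve}) must vanish. The mechanism is the rationality dichotomy: by Lemma \ref{lemG} the spray coefficients $G^i$ are rational in $y$, hence so is the Berwald curvature $B^i_{\ jkl}$ obtained by three vertical derivatives, whereas the right-hand side of (\ref{IBCurve}) carries factors of $F=A^{1/m}$ and is irrational in $y$; equality then forces $c=0$. Substituting $c=0$ into ${\bf S}=(n+1)cF$ yields ${\bf S}=0$, which is the assertion of the corollary. Equivalently, one may bypass Theorem \ref{IBM} and instead apply the Corollary to Theorem \ref{mainthm3} directly to the identity ${\bf S}=(n+1)cF$, since that corollary already records that isotropic $S$-curvature of an $m$-th root metric must vanish.

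The only point that requires care is the bookkeeping of the scalar $c$: one must confirm that the function $c=c(x)$ appearing in the isotropic Berwald curvature (\ref{IBCurve}) is \emph{identical} to the one occurring in the induced $S$-curvature ${\bf S}=(n+1)cF$, so that forcing $c=0$ through the $m$-th root structure genuinely propagates to the $S$-curvature. This identification is precisely the content of the Tayebi-Rafie Rad result, so once it is in hand there is no further obstacle and the conclusion ${\bf S}=0$ is immediate.
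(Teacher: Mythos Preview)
Your proposal is correct and follows essentially the same argument as the paper: invoke \cite{TR} to obtain ${\bf S}=(n+1)cF$ from the isotropic Berwald assumption, then use Theorem \ref{IBM} (equivalently, $c=0$) to conclude ${\bf S}=0$. Your alternative route via the Corollary to Theorem \ref{mainthm3} is also valid and, as you note, does not even require tracking that the two scalars $c$ coincide.
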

\section{ Proof of the Theorem \ref{mainthm4}}
The quotient ${\bf J}/{\bf I}$ is regarded as the relative rate of change of mean Cartan torsion ${\bf I}$ along Finslerian geodesics.  Then $F$  is said to be isotropic mean Landsberg metric if ${\bf J}=cF \bf I$, where $c=c(x)$ is a scalar function on $M$. In this section, we are going to prove  the Theorem \ref{mainthm4}. More precisely, we show that every  $m$-th root isotropic mean Landsberg metric reduces to a weakly Landsberg metric.

\bigskip

\noindent {\it\bf Proof of Theorem \ref{mainthm4}}:
The mean Cartan tensor of $F$ is given by following
\begin{eqnarray*}
I_i&=&g^{jk}C_{ijk}\\
&=&\frac{1}{m}A^{-3}\big[mAA^{jk}+\frac{m-2}{m-1}y^jy^k\big]\\
&&\times\Big[A^{2}A_{ijk}+(\frac{2}{m}-1)\{(\frac{2}{m}-2)A_{i}A_{j}A_{k}+A[A_{i}A_{jk}+A_{j}A_{ki}+A_{k}A_{ij}]\}\Big].
\end{eqnarray*}
The mean Landsberg curvature of $F$ is given by
\begin{eqnarray*}
J_i&=&g^{jk}L_{ijk}\\
&=&A^{-\frac{2}{m}}\big[mAA^{jk}+\frac{m-2}{m-1}y^jy^k\big]\big[-\frac{1}{2m}A^{\frac{2}{m}-1}A_sG^s_{ijk}\big]\\
&=&-\frac{1}{2m}A^{-1}A_sG^s_{ijk}\big[mAA^{jk}+\frac{m-2}{m-1}y^jy^k\big].
\end{eqnarray*}
Since ${\bf J}=cF \bf I$, then we have
\[
A_sG^s_{ijk}=-2cA^{\frac{1}{m}-2}\Big[A^{2}A_{ijk}+(\frac{2}{m}-1)\{(\frac{2}{m}-2)A_{i}A_{j}A_{k}+A[A_{i}A_{jk}+A_{j}A_{ki}+A_{k}A_{ij}]\}\Big].
\]
By the Lemma \ref{lemG}, the left hand side is a rational function in
$y$, while its right-hand side is an irrational function in $y$.
Thus, either $c=0$ or $A$ satisfies the following PDE:
\[
A^{2}A_{ijk}+(\frac{2}{m}-1)(\frac{2}{m}-2)A_{i}A_{j}A_{k}+(\frac{2}{m}-1)A\{A_{i}A_{jk}+A_{j}A_{ki}+A_{k}A_{ij}\}=0.
\]
That implies that $C_{ijk}=0$. Hence, by Deike's theorem, $F$ is
Riemannian metric, which contradicts our assumption. Therefore,
$c=0$. This completes the proof.
\qed

\bigskip

By the similarly method, we have the following.
\begin{thm}
Let $F=\sqrt[m]{A}$ be an  non-Riemannian  m-th root Finsler metric on an open subset
$U\subseteq \mathbb{R}^n$. Suppose that $F$ has isotropic  Landsberg curvature, i.e., ${\bf L}=cF{\bf C}$;
where $c=c(x)$ is a scalar function on $M$. Then $F$ reduces to a Landsberg metric.
\end{thm}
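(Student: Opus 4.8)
The plan is to run the argument of Theorem~\ref{mainthm4} essentially verbatim, but with the full Cartan and Landsberg tensors in place of their traces; this is in fact cleaner, because the isotropy hypothesis ${\bf L}=cF{\bf C}$ already reads $L_{ijk}=cF\,C_{ijk}$ with all three indices free, so nothing has to be uncontracted. First I would record the coordinate expressions of these two tensors for an $m$-th root metric. From $L_{ijk}=-\tfrac12 y_sG^s_{ijk}$ together with $y_s=\tfrac1m A^{\frac2m-1}A_s$ one obtains
\[
L_{ijk}=-\frac{1}{2m}A^{\frac{2}{m}-1}A_sG^s_{ijk},
\]
while the same third-order differentiation of $\tfrac12F^2$ that produced the bracket in the formula for $I_i$ gives (in the normalization fixed there by $I_i=g^{jk}C_{ijk}$)
\[
C_{ijk}=\frac{1}{m}A^{\frac{2}{m}-3}\Big[A^{2}A_{ijk}+(\tfrac{2}{m}-1)\big\{(\tfrac{2}{m}-2)A_iA_jA_k+A[A_iA_{jk}+A_jA_{ki}+A_kA_{ij}]\big\}\Big].
\]

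Second, I would substitute both expressions into $L_{ijk}=cF\,C_{ijk}$ with $F=A^{1/m}$ and cancel the common powers of $A$. The exponents combine so that the factor $A^{\frac2m-1}$ on the left and the factor $A^{\frac1m}A^{\frac2m-3}$ on the right leave
\[
A_sG^s_{ijk}=-2c\,A^{\frac{1}{m}-2}\Big[A^{2}A_{ijk}+(\tfrac{2}{m}-1)(\tfrac{2}{m}-2)A_iA_jA_k+(\tfrac{2}{m}-1)A\{A_iA_{jk}+A_jA_{ki}+A_kA_{ij}\}\Big],
\]
which is exactly the identity met in Theorem~\ref{mainthm4}, now holding with the indices $i,j,k$ all free rather than only after a trace.

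Third comes the rationality dichotomy, which is where the whole argument hinges. By Lemma~\ref{lemG} the spray coefficients $G^s$ are rational in $y$, hence so are their third vertical derivatives $G^s_{ijk}$; as $A_s$ is polynomial, the left-hand side is a rational function of $y$. On the right the bracket is polynomial, but it carries the factor $A^{\frac1m-2}$, and $A^{1/m}$ is an irrational function of $y$ because $A$ is a genuine ($m\ge3$, irreducible) $m$-th root form. Hence the right-hand side is irrational unless it vanishes, forcing either $c=0$ or the vanishing of the bracket; the delicate point, exactly as in the mean case, is precisely this irrationality step, which rests on the irreducibility of $A$ guaranteeing that $A^{1/m}$ cannot be absorbed into the rational left-hand side. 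Finally, the vanishing of the bracket is exactly $C_{ijk}=0$, so by Deicke's theorem $F$ would be Riemannian, against the non-Riemannian hypothesis; therefore $c=0$, whence $A_sG^s_{ijk}=0$ and ${\bf L}=0$, i.e. $F$ is a Landsberg metric.
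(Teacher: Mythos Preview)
Your proof is correct and follows exactly the approach the paper intends: the paper states this theorem with only the remark ``By the similarly method'' (referring to the proof of Theorem~\ref{mainthm4}), and you have supplied precisely those details, arriving at the identical displayed equation $A_sG^s_{ijk}=-2c\,A^{\frac{1}{m}-2}[\cdots]$ and the same rationality dichotomy. One small remark: irreducibility of $A$ is not assumed here and is not needed for the irrationality step---what matters is that $A^{1/m}$ cannot be rational in $y$ for a genuine $m$-th root metric (else $F$ would be a linear form), and once the bracket vanishes you have $C_{ijk}=0$ directly, so the appeal to Deicke is harmless but unnecessary.
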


\section{ Proof of the Theorem \ref{mainthm1}}
A Finsler metric $F=F(x,y)$ on a manifold $M$ is said to be locally dually flat if at any point there is a  coordinate system
$(x^i)$ in which the spray coefficients are in the
following form
\[
G^i = -\frac{1}{2}g^{ij}H_{y^j},
\]
where $H=H(x, y)$ is a $C^\infty$ scalar function on $TM_{0}=TM \setminus \{ 0 \}$ satisfying $H(x, \lambda y)=\lambda^3H(x, y)$ for all $\lambda > 0$. Such a coordinate system is called an adapted coordinate system \cite{TN1}. Recently, Shen proved that  the Finsler metric $F$ on an open subset $U\subset \mathbb{R}^n$ is dually flat if and
only if it satisfies
\[
(F^2)_{x^ky^l}y^k=2(F^2)_{x^l}.
\]
In this case, $H =-\frac{1}{6}[F^2]_{x^m}y^m$.

In this section, we will prove a generalized version of Theorem
\ref{mainthm1}. Indeed  we find necessary and sufficient condition
under which a conformal $\beta$-change of an generalized $m$-th
root metric be locally dually flat. Let $F$ be a scalar function on
$TM$ defined by $F=\sqrt{A^{2/m}+B}$,  where $A$ and $B$ are given
by \be A := a_{i_{1}}..._{i_{m}}(x)y^{i_1} . . . y^{i_m}, \ \ \    B
:=b_{ij}(x)y^iy^j.\nonumber \ee
Then $F$ is called generalized  $m$-th root Finsler metric. Suppose that the matrix $(A_{ij})$ defines a positive definite tensor and $(A^{ij})$  denotes its inverse.\\

\bigskip

Now, we  are going to prove the following:
\begin{thm}\label{mainthma}
Let  $F=\sqrt{A^{2/m}+B}$ be an generalized $m$-th root Finsler
metric on an open subset $U\subset \mathbb{R}^n$, where $A$ is
irreducible. Suppose that $\bar{F}=e^\alpha F+\beta$ be conformal
$\beta$-change of $F$ where $\beta=b_i(x)y^i,\alpha=\alpha(x)$. Then
$\bar{F}$ is locally dually flat if and only if there exists a
1-form $\theta = \theta_{l}(x)y^l$  on U such that the following
holds
\begin{eqnarray}
&&e^{2\alpha}[2B_{x^{l}}+4\alpha_{x^{l}}B-B_{0l}-2\alpha_{0}B_{l}]=2(\beta_{l}\beta_{0}+\beta\beta_{0l}-2\beta\beta_{x^{l}}),\label{SRR1}\\
&&A_{x^l}=\frac{1}{3m}[mA\theta_{l}+2\theta A_{l}+2(\alpha_{0}A_{l}-\alpha_{x^{l}}A)],\label{SRR2}\\
&&\Upsilon_l\Upsilon_0\beta=2\Upsilon[
((\Upsilon_{0}\beta)_l+\Upsilon_l\beta_0+\alpha_0\beta\Upsilon_l-2\Upsilon_{x^{l}}\beta)
+2e^\alpha\Upsilon\Psi],
\end{eqnarray}
where $\Upsilon:=A^{\frac{2}{m}}+B$, $\beta _{0l}=\beta _{x^k
y^l}y^k$ , $\alpha_0=\alpha_{x^l}y^l$,
$\beta_{x^{l}}=(b_i)_{x^{l}}y^i$, $\beta_0=(b_i)_0y^i$,
$\beta_{0l}=(b_l)_0$, and
\begin{eqnarray*}
&&\Upsilon_p=\frac{2}{m}A^{\frac{2}{m}-1}A_p+B_p,\\
&&\Upsilon_{0p}=\frac{2}{m}A^{\frac{2}{m}-2}\big[(\frac{2}{m}-1)A_pA_0+AA_{0p}\big]+B_{0p},\\
&&\Psi=\alpha_0\beta_l+\beta_{0l}-2\beta_{x^{l}}-2\alpha_{x^l}\beta.
\end{eqnarray*}
\end{thm}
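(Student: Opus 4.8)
The plan is to apply Shen's characterization of dual flatness directly to $\bar F$, reduce the resulting condition to an identity that couples a ``rational'' part with a single unpaired radical, and then use the irreducibility of $A$ twice: first to strip off the outer radical $\sqrt{\Upsilon}$, then to peel off the inner radical $A^{2/m}$ and to force the $1$-form $\theta$ into existence.

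By Shen's theorem, $\bar F$ is locally dually flat if and only if $(\bar F^2)_{x^ky^l}y^k=2(\bar F^2)_{x^l}$. Since $\bar F=e^{\alpha}F+\beta$ and $F^2=\Upsilon=A^{2/m}+B$,
\[
\bar F^2=e^{2\alpha}\Upsilon+2e^{\alpha}\beta\sqrt{\Upsilon}+\beta^2 .
\]
I would compute $(\bar F^2)_{x^l}$, $(\bar F^2)_{y^l}$ and $(\bar F^2)_{x^ky^l}y^k$, using throughout $FF_{y^l}=\tfrac12\Upsilon_l$, $FF_{x^l}=\tfrac12\Upsilon_{x^l}$ and their contractions $FF_{x^k}y^k=\tfrac12\Upsilon_0$, $(FF_{y^l})_{x^k}y^k=\tfrac12\Upsilon_{0l}$ to remove every occurrence of the unknown derivatives of $F$, so that the only radical left is a single overall factor $F=\sqrt{\Upsilon}$ (with $\Upsilon$ in a denominator coming from $F_{x^k}=\Upsilon_{x^k}/(2F)$). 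Multiplying the criterion by a suitable power of $\Upsilon$ and clearing the remaining powers of $A$ turns it into an identity
\[
\mathcal P+\mathcal Q\,\sqrt{\Upsilon}=0,
\]
where $\mathcal P$ and $\mathcal Q$ lie in the ring generated over the smooth functions of $x$ by the polynomials in $y$ together with the element $A^{2/m}$; here $\mathcal Q$ collects the contribution of the $F$-free summand $e^{2\alpha}\Upsilon+\beta^2$ of $\bar F^2$, and $\mathcal P$ that of the cross term $2e^{\alpha}\beta\sqrt{\Upsilon}$, so $\mathcal P$ is linear, and $\mathcal Q$ at most quadratic, in $\beta$ and its derivatives.

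The separation step is the core of the argument. Because $A$ is irreducible, $\Upsilon=A^{2/m}+B$ is not a square in that ring — any such factorization would display a proper factor of $A$ — hence $\sqrt{\Upsilon}$ is not in it and the identity forces $\mathcal P=0$ and $\mathcal Q=0$ separately. I would then split each of these further by the presence of $A^{2/m}$, using irreducibility once more: $A$ of degree $m$ cannot divide $A_l$ of degree $m-1$, while $A_0=A_{x^k}y^k$ has degree $m+1$, so wherever the computation forces $A\mid A_0$ one gets $A_0=\theta A$ for a uniquely determined $1$-form $\theta=\theta_l(x)y^l$ — this is where $\theta$ enters, together with $A_{0l}=(A_0)_{y^l}-A_{x^l}=\theta_lA+\theta A_l-A_{x^l}$. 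Tracking which of $\beta,\beta_{x^l},\beta_{0l},\beta_0,b_l$ occurs in each piece, the $\beta$-free, $A^{2/m}$-free part of $\mathcal Q=0$ is exactly \eqref{SRR1}; the $\beta$-free, $A^{2/m}$-carrying part of $\mathcal Q=0$, after inserting $A_0=\theta A$ and $A_{0l}=\theta_lA+\theta A_l-A_{x^l}$ and cancelling one factor $A$, collapses to \eqref{SRR2}; and $\mathcal P=0$, after clearing $\Upsilon$ from the denominators and regrouping, is the third displayed equation, with $\Psi=\alpha_0\beta_l+\beta_{0l}-2\beta_{x^l}-2\alpha_{x^l}\beta$ appearing as the coefficient of the overall factor $F$ in the cross term.

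The converse is obtained by substituting \eqref{SRR1}, \eqref{SRR2} and the third equation (with $A_0=\theta A$) back into $(\bar F^2)_{x^l}$ and $(\bar F^2)_{x^ky^l}y^k$ and checking Shen's criterion; this is routine. I expect the main obstacle to be the bookkeeping in the separation: one must verify that, once all denominators are cleared, $\mathcal P$ and $\mathcal Q$ contain no unpaired $\sqrt{\Upsilon}$, that $\Upsilon$ is genuinely not a square in the relevant ring (this is precisely the role of irreducibility of $A$, and it simultaneously excludes the degenerate situations in which $A^{2/m}$ is itself rational), and that the nested split of $\mathcal Q$ is organized so as to return exactly the stated three equations; the differentiations themselves are long but mechanical. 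Setting $B=0$, whence $\Upsilon=A^{2/m}$ and $F=\sqrt[m]{A}$, in these three equations then recovers Theorem \ref{mainthm1}.
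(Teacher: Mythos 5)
Your proposal follows essentially the same route as the paper: apply Shen's criterion to $\bar F^2=e^{2\alpha}\Upsilon+2e^{\alpha}\beta\sqrt{\Upsilon}+\beta^2$, separate the resulting identity into the $\sqrt{\Upsilon}$-part (the cross term, yielding the third equation), the $A^{2/m}$-part (yielding \eqref{SRR2} after extracting $A_0=\theta A$ from irreducibility and $\deg A_l=m-1$), and the polynomial part (yielding \eqref{SRR1}), with the converse by direct substitution. The paper packages the separation step as Lemma \ref{lemp} rather than your ``$\Upsilon$ is not a square'' argument, but the content and level of justification are the same.
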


\bigskip

To prove Theorem \ref{mainthma}, we need the following.
\begin{lem}\label{lemp}
Suppose that the equation $\Phi A^{\frac{2}{m}-2}+\Psi A^{\frac{1}{m}-1}+\Theta=0$ holds,  where $\Phi, \Psi,\Theta $  are polynomials in $y$ and $m>2$. Then $\Phi=\Psi=\Theta=0$.
\end{lem}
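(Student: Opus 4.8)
The plan is to turn the stated functional identity into an assertion about linear independence of powers of $A^{1/m}$ over the field of rational functions in $y$, and then to exploit that $A$ is an irreducible polynomial of degree $m\ge 3$. Fix $x$, so that $A$, $\Phi$, $\Psi$, $\Theta$ are honest polynomials in $y$ and the identity $\Phi A^{2/m-2}+\Psi A^{1/m-1}+\Theta=0$ is required to hold identically in $y$. The first step is to clear the negative exponents: since $A$ is a nonzero polynomial, multiplying through by $A^2$ yields the equivalent identity
\[
\Phi\, A^{2/m}+(\Psi A)\, A^{1/m}+(\Theta A^2)=0,
\]
in which the coefficients $\Phi$, $\Psi A$, $\Theta A^2$ are polynomials in $y$, while the three accompanying factors $A^{2/m}$, $A^{1/m}$, $1$ are exactly the powers $A^{k/m}$ with $k=2,1,0$.

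The core step is to show that these three powers are linearly independent over $K:=\mathbb{R}(y^1,\dots,y^n)$. Because $A$ is irreducible with $\deg_y A=m\ge 3$, the binomial $z^m-A$ is irreducible over $K$: an irreducible polynomial cannot be a $p$-th power in $K$ for any prime $p$ (from $A=c\,g^p$ with $g=g_1/g_2\in K$ in lowest terms one gets $A\,g_2^{\,p}=c\,g_1^{\,p}$, so $g_2$ is constant and $A=c'g_1^{\,p}$, contradicting irreducibility once $\deg_y A\ge 1$), and likewise $A\notin -4K^4$; by the classical criterion for irreducibility of binomials (Vahlen--Capelli) the claim follows. Hence $[K(A^{1/m}):K]=m$, so $1,A^{1/m},\dots,A^{(m-1)/m}$ is a $K$-basis of $K(A^{1/m})$, and in particular, since $m>2$, the elements $1,A^{1/m},A^{2/m}$ are $K$-linearly independent.

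Applying this to the displayed identity forces $\Phi=0$, $\Psi A=0$ and $\Theta A^2=0$; since $A\not\equiv 0$ this gives $\Phi=\Psi=\Theta=0$, as asserted (and the conclusion holds identically in $y$ for each $x$, so the $x$-dependence is immaterial). As an alternative to the field-theoretic language one can argue directly with the $A$-adic valuation: extend it to $K(A^{1/m})$ with $v(A^{1/m})=1$, $v(A)=m$, so that the three terms $\Phi A^{2/m}$, $(\Psi A)A^{1/m}$, $\Theta A^2$ have valuations $\equiv 2,\,1,\,0 \pmod m$ respectively, hence pairwise distinct when $m>2$; a nontrivial vanishing sum of elements with pairwise distinct valuations is impossible by the ultrametric inequality, so all three terms, and therefore all three coefficients, vanish.

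I expect the only point needing genuine care to be the irreducibility of $z^m-A$ over $K$ — that is, pinning down that the hypothesis ``$A$ irreducible'' really does exclude $A$ being a perfect $p$-th power (or $-4$ times a fourth power) of a rational function — since everything after that is routine linear algebra over $K$ (or a one-line valuation estimate). It is worth noting explicitly in the writeup that $m>2$ is used exactly where we need $0,1,2$ to be distinct residues modulo $m$; for $m=2$ the lemma is genuinely false.
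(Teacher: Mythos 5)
The paper states Lemma \ref{lemp} without any proof at all, so there is nothing internal to compare your argument against; what you have written is a complete and correct proof, and it in fact supplies a justification the paper omits. Two remarks. First, you are right that the lemma as literally stated is false: if $A=(y^1)^m$ one gets nontrivial solutions (e.g.\ for $m=3$, $\Phi=(y^1)^2$, $\Psi=-1$, $\Theta=0$), so the hypothesis that $A$ is irreducible of degree $m\geq 3$ --- which you import from the surrounding Theorem \ref{mainthma} --- is genuinely needed, and your counterexample remark for $m=2$ is also correct since $0\equiv 2\pmod 2$. Second, both of your routes are sound, but they can be merged: $z^m-A$ is Eisenstein at the prime $(A)$ of the UFD $\mathbb{R}[y^1,\dots,y^n]$ (constant term $-A\in(A)\setminus(A^2)$, leading coefficient a unit), which gives in one stroke both the irreducibility over $K=\mathbb{R}(y)$ that you obtain from Vahlen--Capelli and the totally ramified extension of the $A$-adic valuation with $v(A^{1/m})=1$ that your second argument needs; this would shorten the write-up. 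The only point worth making explicit is the passage from the abstract field $K[z]/(z^m-A)$ to the actual real-analytic function $A^{1/m}$ on the cone where $A>0$: the evaluation map $z\mapsto A^{1/m}$ into the (integral) ring of analytic functions is a nonzero homomorphism out of a field, hence injective, so the $K$-linear independence of $1$, $A^{1/m}$, $A^{2/m}$ transfers to the functions and forces $\Phi=\Psi A=\Theta A^2=0$, whence $\Phi=\Psi=\Theta=0$.
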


\bigskip

\noindent {\it\bf Proof of Theorem \ref{mainthma}}: The following hold
\begin{eqnarray*}
{\bar F}^2 \!\!\!\!&=&\!\!\!\!\! e^{2\alpha}(A^{\frac{2}{m}}+B)+2e^\alpha\beta (A^{\frac{2}{m}}+B)^{1/2}+\beta^2,\\
({\bar F}^2)_{x^k} \!\!\!\!&=&\!\!\!\!\! 2\alpha_{x^k}e^{2\alpha}(A^{\frac{2}{m}}+B)+e^{2\alpha}(\frac{2}{m}A^{\frac{2}{m}-1}A_{x^{k}}+B_{x^{k}})
+2\alpha_{x^k}e^\alpha\beta(A^{\frac{2}{m}}+B)^{\frac{1}{2}}
\\
&&+e^\alpha[(A^{\frac{2}{m}}+B)^{-1/2}(\frac{2}{m}A^{\frac{2}{m}-1}A_{x^k}+B_{x^{k}})\beta
+2(A^{\frac{2}{m}}+B)^{1/2}\beta_{x^{k}}]+2\beta_{x^{k}}\beta.
\end{eqnarray*}
Then
\begin{eqnarray*}
[{\bar F}^2]_{x^{k}y^{l}}y^k=\!\!\!\!&&\!\!\! 2\alpha_0
e^{2\alpha}\Upsilon_l+e^{2\alpha}\Upsilon_{0l}+2\alpha_0
e^\alpha\beta_l\Upsilon^{\frac{1}{2}}+\alpha_0
e^\alpha\beta\Upsilon^{-\frac{1}{2}}\Upsilon_l+2e^\alpha\beta_{0l}\Upsilon^{\frac{1}{2}}
\\&+&e^\alpha\beta_0\Upsilon^{-\frac{1}{2}}\Upsilon_l+e^\alpha\beta_l\Upsilon^{-\frac{1}{2}}\Upsilon_0-\frac{1}{2}e^\alpha\beta\Upsilon^{-\frac{3}{2}}\Upsilon_l\Upsilon_0+e^\alpha\beta\Upsilon^{-\frac{1}{2}}\Upsilon_{0l}
\\&+&2\beta_l\beta_0+2\beta\beta_{0l}.
\end{eqnarray*}
Since $\bar{F}$ be a locally dually flat metric, then
\begin{eqnarray*}
e^\alpha\Upsilon^{-\frac{3}{2}}\Big[\!\!\!\!&-&\!\!\!\!\!\ \frac{1}{2}\beta\Upsilon_l\Upsilon_0+\Upsilon(\beta\Upsilon_{0l}+\beta_l\Upsilon_0+
\beta_0\Upsilon_l+\alpha_0\beta\Upsilon_l-2\beta\Upsilon_{x^l})
\\ \!\!\!\!&+&\!\!\!\!\!\! 2e^\alpha\Upsilon^2(\alpha_0\beta_l+\beta_{0l}-2\alpha_{x^l}\beta-2\beta_{x^l})\Big]
\\\!\!\!\!&+&\!\!\!\!\!\! \frac{2}{m}e^{2\alpha}A^{\frac{2}{m}-2}\Big[2\alpha_0AA_l+(\frac{2}{m}-1)A_lA_0+AA_{0l}
-2\alpha_{x^l}A^2-2AA_{x^l}\Big]
\\\!\!\!\!&+&\!\!\!\!\!\! e^{2\alpha}\Big[2\alpha_0B_l+B_{0l}-4\alpha_{x^l}B-2B_{x^l}\Big]\\
\!\!\!\!&-&\!\!\!\!\!\! 4\beta\beta_{x^l}+2\beta_l\beta_0+2\beta\beta_{0l}=0.
\end{eqnarray*}
By Lemma \ref{lemp},  we have
\begin{eqnarray}
&&2\alpha_0AA_l+(\frac{2}{m}-1)A_lA_0+AA_{0l}-2\alpha_{x^l}A^2=2AA_{x^l},\label{m6}\\
&&\frac{1}{2}\beta\Upsilon_l\Upsilon_0=\Upsilon[(\beta\Upsilon_{0})_l+\beta_0\Upsilon_l+\alpha_0\beta\Upsilon_l-2\beta\Upsilon_{x^l}
+2e^\alpha\Upsilon\Psi],\\
&&e^{2\alpha}\big[2\alpha_0B_l+B_{0l}-4\alpha_{x^l}B-2B_{x^l}\big]=2(2\beta\beta_{x^l}-\beta_l\beta_0-\beta\beta_{0l}).
\end{eqnarray}
One can rewrite (\ref{m6}) as follows
\be
A(2A_{x^l}-A_{0l}+2\alpha_{x^l}A)=((\frac{2}{m}-1)A_0+2\alpha_0A)A_l.\label{d11}
\ee
Irreducibility of $A$ and $deg(A_l)=m-1$ imply that there exists
a 1-form $\theta=\theta_l y^l$ on $U$ such that
\be A_0=\theta
A.\label{d12}
\ee
By (\ref{d12}), we get
\be
A_{0l}=A\theta_l+\theta A_l-A_{x^l}.\label{d13}
\ee
Substituting (\ref{d12}) and (\ref{d13}) into (\ref{d11}) yields (\ref{SRR2}).
The converse yields by  a direct computation. This completes the proof. \qed

\section{ Proof of the Theorem \ref{mainthm2}}
It is known that
a Finsler metric  $F(x,y)$ on  ${\cal U}$  is  projective  if and only if
its geodesic coefficients $G^i$ are in the form
\[G^i(x, y) = P(x, y) y^i,\]
 where $P: T{\cal U} = {\cal U}\times \mathbb{R}^n \rightarrow \mathbb{R}$ is positively homogeneous with degree one,
$P(x,  \lambda y) = \lambda P(x, y)$, $\lambda >0$. We call $P(x, y)$ the {\it projective factor} of $F(x, y)$.
The following lemma plays an important role.

\begin{lem}\label{lemRap}{\rm (Rapcs\'{a}k)} Let $F(x, y)$ be a Finsler metric on an open subset ${\cal U} \subset \mathbb{R}^n$. $F(x, y)$ is projective on ${\cal U}$ if and only if
it satisfies
\be
F_{x^k y^l} y^k = F_{x^l}. \label{Rap}
\ee
In this case, the projective factor $P(x, y)$ is given by
\be
P = { F_{x^k} y^k\over 2 F}.\label{Gi}
\ee
\end{lem}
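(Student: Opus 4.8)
The plan is to work directly from the spray formula $G^i = \frac{1}{4}g^{il}\big[(F^2)_{x^ky^l}y^k-(F^2)_{x^l}\big]$ recalled in Section~3, and to rewrite the bracketed quantity so that the Rapcs\'ak defect $\rho_l := F_{x^ky^l}y^k - F_{x^l}$ appears explicitly. Writing $F^2$ and differentiating by the product rule, one finds
\[
(F^2)_{x^ky^l}y^k-(F^2)_{x^l} = 2F_{y^l}(F_{x^k}y^k) + 2F\rho_l ,
\]
so that
\[
G^i = \tfrac{1}{2}\,g^{il}\big[(F_{x^k}y^k)F_{y^l} + F\rho_l\big].
\]
Throughout I would use two standard homogeneity facts: since $F$ is positively $1$-homogeneous in $y$, Euler's theorem gives $g_{ij}y^j = FF_{y^i}$ (equivalently $g^{il}F_{y^l}=y^i/F$) and $F_{y^m}y^m=F$; and since each $F_{x^k}$ is again $1$-homogeneous in $y$, Euler's theorem also gives $F_{x^ky^m}y^m = F_{x^k}$.

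For the forward implication I would assume \eqref{Rap} holds, i.e.\ $\rho_l=0$. Then the displayed formula collapses to $G^i = \tfrac{1}{2}(F_{x^k}y^k)\,g^{il}F_{y^l} = \tfrac{1}{2}(F_{x^k}y^k)\,y^i/F$, which is exactly $Py^i$ with $P = (F_{x^k}y^k)/(2F)$. Homogeneity of $P$ is immediate, since $F_{x^k}y^k$ is $2$-homogeneous and $F$ is $1$-homogeneous; hence $F$ is projective and \eqref{Gi} holds.

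For the converse I would assume $G^i = Py^i$. Lowering the index with $g_{im}$ and using $g_{im}y^i = FF_{y^m}$ turns the spray formula into
\[
PFF_{y^m} = \tfrac{1}{2}(F_{x^k}y^k)F_{y^m} + \tfrac{1}{2}F\rho_m ,
\]
hence $F\rho_m = \big(2PF - F_{x^k}y^k\big)F_{y^m}$. Contracting with $y^m$ and using $F_{y^m}y^m=F$ together with $\rho_m y^m = F_{x^ky^m}y^ky^m - F_{x^m}y^m = F_{x^k}y^k - F_{x^m}y^m = 0$ (the first equality by $F_{x^ky^m}y^m=F_{x^k}$), I obtain $0 = \big(2PF - F_{x^k}y^k\big)F$. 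Since $F>0$ this forces $2PF = F_{x^k}y^k$, which is the formula \eqref{Gi}; feeding it back into $F\rho_m = \big(2PF - F_{x^k}y^k\big)F_{y^m} = 0$ yields $\rho_m=0$, i.e.\ \eqref{Rap}.

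The computation is short once the defect $\rho_l$ is isolated, so the step I would treat most carefully is the converse: one must resist trying to invert $g^{il}$ by hand, and instead contract first with $g_{im}y^i$ and then with $y^m$, because the resulting scalar equation $0 = \big(2PF - F_{x^k}y^k\big)F$ is what simultaneously pins down $P$ and forces the defect to vanish. The two homogeneity identities — for $F$ and for $F_{x^k}$ — are precisely what make the $y^m$-contraction annihilate $\rho_m y^m$, and they are the only non-formal ingredient in the argument.
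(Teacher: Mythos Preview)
Your argument is correct. The key computation $(F^2)_{x^ky^l}y^k-(F^2)_{x^l}=2F_{y^l}(F_{x^k}y^k)+2F\rho_l$ is right, the homogeneity identities $g^{il}F_{y^l}=y^i/F$, $F_{y^m}y^m=F$, and $F_{x^ky^m}y^m=F_{x^k}$ are exactly the tools needed, and your two-step contraction in the converse (first lower the index with $g_{im}$, then contract with $y^m$) cleanly isolates the scalar identity $2PF=F_{x^k}y^k$ before forcing $\rho_m=0$.

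As for comparison: the paper does not supply its own proof of this lemma. It is quoted as a classical result of Rapcs\'{a}k and used as a black box in Section~6, so there is no route to compare against. Your self-contained derivation from the spray formula of Section~3 is a legitimate and standard way to recover it.
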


\bigskip
Much earlier, G. Hamel proved that a Finsler metric $F(x, y)$
on ${\cal U}\subset \mathbb{R}^n$ is projective   if and only if
\be
F_{x^ky^l} = F_{x^l y^k}.
\ee
Thus (\ref{Gi}) and (\ref{Rap}) are equivalent.

In this section, we will prove a generalized version of Theorem
\ref{mainthm2}. Indeed  we study  the conformal $\beta$-change of an
generalized $m$-th root metric $F=\sqrt{A^{\frac{2}{m}}+B}$, where
$A$ is irreducible. More precisely, we  prove the following:
\begin{thm}\label{mainthmb}
Let  $F=\sqrt{A^{2/m}+B}$ be an generalized $m$-th root Finsler
metric on an open subset $U\subset \mathbb{R}^n$, where $A$ is
irreducible. Suppose that $\bar{F}=e^\alpha F+\beta$ be conformal
$\beta$-change of $F$ where $\beta=b_i(x)y^i,\alpha=\alpha(x)$. Then
$\bar{F}$ is locally projectively flat if and only if it is locally
Minkowskian.
\end{thm}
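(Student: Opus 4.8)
The plan is to apply Rapcs\'{a}k's criterion (Lemma \ref{lemRap}) to $\bar F = e^\alpha F + \beta$ and show that projective flatness forces the irrational part to vanish identically, leaving a flat (Minkowskian) metric. First I would compute $\bar F_{x^k}$ and $\bar F_{x^k y^l} y^k$ explicitly in terms of $\Upsilon := A^{2/m} + B$ and its derivatives. Writing $\bar F = e^\alpha \sqrt{\Upsilon} + \beta$, we get $\bar F_{x^k} = \alpha_{x^k} e^\alpha \sqrt{\Upsilon} + \tfrac{1}{2} e^\alpha \Upsilon^{-1/2}\Upsilon_{x^k} + \beta_{x^k}$, and differentiating again in $y^l$ and contracting with $y^k$ produces terms of the form $e^\alpha \Upsilon^{-1/2}(\cdots) + e^\alpha \Upsilon^{-3/2}(\cdots) + (\text{polynomial in } y)$. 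Setting $\bar F_{x^k y^l} y^k - \bar F_{x^l} = 0$ and multiplying through by $\Upsilon^{3/2}$ separates the expression into a polynomial part (coming from $\beta$, which is a $1$-form) and a part carrying the factor $e^\alpha$ against half-integer powers of $\Upsilon$.

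The key step is the separation-of-rationality argument, exactly as in the proof of Theorem \ref{mainthma}: since $A$ is irreducible and $m > 2$, the functions $1$, $A^{1/m-1}$, $A^{2/m-2}$ (equivalently the relevant half-integer powers of $\Upsilon$ after expansion) are linearly independent over the field of rational functions in $y$, so by Lemma \ref{lemp} each grouped coefficient must vanish separately. I expect this to yield: (1) the polynomial terms involving only $\beta$ must vanish, which combined with the homogeneity of $\beta$ forces $\beta$ to be closed and in fact (after the usual argument) $b_i$ to be independent of $x$, i.e. $\beta$ contributes nothing curvature-wise; (2) the terms with $\alpha$ force $\alpha_{x^l} = 0$, so $\alpha$ is constant; (3) the remaining equation on $A$ alone becomes $A_{x^l} = 0$ type, i.e. the $m$-th root part is itself Minkowskian. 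Then $G^i = 0$ in the adapted coordinates, so $\bar F$ is locally Minkowskian; the converse is trivial since Minkowskian metrics are projectively flat with $P = 0$.

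The main obstacle will be step (1)–(2): correctly bookkeeping which monomials in $y$ (and which powers of $A$) appear in the polynomial part versus the irrational part, and checking that the $\beta$-contributions genuinely cannot cancel against the $e^\alpha$-contributions. One has to be careful that $\beta^2$ and cross terms $e^\alpha \beta \sqrt{\Upsilon}$ are handled on the correct side of the rational/irrational divide, and that the coefficient extracted for the pure-$A$ equation is not accidentally trivial. Once the separation is justified, reducing ``$A$ satisfies $A_{x^l} = (\text{something})\cdot A_l$ with the something forced to zero'' to ``$A$ is $x$-independent'' uses irreducibility of $A$ and the degree count $\deg A_l = m-1 < m = \deg A$, just as in the derivation of \eqref{d12}. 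I would then conclude that all spray coefficients vanish and invoke the definition of locally Minkowskian to finish.
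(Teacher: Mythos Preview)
Your separation-of-rationality strategy is the right starting point and matches the paper through Proposition~\ref{lemb}, but your expectation of what the separation \emph{yields} is too optimistic, and that is a genuine gap. After applying Rapcs\'ak's criterion and splitting by powers of $A^{2/m}$ (not of $\Upsilon$---the paper clears $\Upsilon^{3/2}$ and then sorts by $A^{2/m}$ and $A^{4/m}$), the pure-$A$ coefficient gives only
\[
mA\bigl(A_{0l}+A_l\alpha_0-A_{x^l}\bigr)=(m-1)A_0A_l,
\]
and irreducibility plus the degree count $\deg A_l<\deg A$ forces $A_0=2m\,A\,\theta$ for some $1$-form $\theta$, \emph{not} $A_{x^l}=0$. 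Feeding this back in, the remaining equations give $B_0=4B\theta$, so the spray coefficients of $F$ become $G^i=\theta\,y^i$. This says exactly that $F$ is projectively flat with \emph{linear} projective factor, hence Berwald---but it does not say $G^i=0$, and it does not force $\alpha$ or $\beta$ to be constant.

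The paper closes the gap with an extra global ingredient you are missing: Numata's theorem. A projectively flat metric has scalar flag curvature; a Berwald metric of nonzero scalar flag curvature must be Riemannian; since $F$ is assumed non-Riemannian (the generalized $m$-th root form), one concludes $\mathbf{K}=0$, and a Berwald metric with $\mathbf{K}=0$ is locally Minkowskian. Without this step (or an equivalent argument ruling out $\theta\neq 0$), your proof stalls at ``$F$ is Berwald'' and cannot reach ``locally Minkowskian.''
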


\bigskip

To prove Theorem \ref{mainthm2},  we need the following.

\begin{lem}\label{0}
Let $(M, F)$ be a Finsler manifold. Suppose that $\bar{F}=e^\alpha
F+\beta$ be a conformal $\beta$-change of $F$. Then $\bar{F}$ is a
projectively flat Finsler metric if and only if the following holds
\be
e^\alpha(F_{0l}-F_{x^{l}})=e^\alpha(\alpha_{x^l}F-\alpha_0F_l)+(b_i)_{x^l}y^i-(b_l)_{0}.\label{Ham}
\ee
\end{lem}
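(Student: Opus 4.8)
The plan is to apply Rapcsák's criterion (Lemma \ref{lemRap}) directly to $\bar F = e^{\alpha(x)}F + \beta$, treating $e^\alpha$ and $\alpha_{x^l}$ as functions of $x$ only and $\beta = b_i(x)y^i$ as linear in $y$. First I would compute the two relevant partial derivatives. Since $\bar F_{x^k} = \alpha_{x^k} e^\alpha F + e^\alpha F_{x^k} + (b_i)_{x^k} y^i$, differentiating again in $y^l$ gives
\[
\bar F_{x^k y^l} = \alpha_{x^k} e^\alpha F_{y^l} + e^\alpha F_{x^k y^l} + (b_l)_{x^k},
\]
because $\alpha$ and $e^\alpha$ do not depend on $y$ and $(b_i)_{x^k} y^i$ is linear in $y$ with $\partial/\partial y^l$ equal to $(b_l)_{x^k}$. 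Contracting with $y^k$ and using homogeneity (so that $F_{y^l} y^k \cdot (\text{nothing}) $ is handled via $\alpha_{x^k} y^k = \alpha_0$ and $F_{x^k y^l} y^k = F_{0l}$ in the paper's notation) yields
\[
\bar F_{x^k y^l} y^k = \alpha_0 e^\alpha F_{y^l} + e^\alpha F_{0l} + (b_l)_{x^k} y^k = \alpha_0 e^\alpha F_l + e^\alpha F_{0l} + (b_l)_0,
\]
writing $F_l := F_{y^l}$ and $(b_l)_0 = (b_l)_{x^k}y^k$.

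Next I would compute the right-hand side of Rapcsák's equation, namely $\bar F_{x^l} = \alpha_{x^l} e^\alpha F + e^\alpha F_{x^l} + (b_i)_{x^l} y^i$. By Lemma \ref{lemRap}, $\bar F$ is projectively flat if and only if $\bar F_{x^k y^l} y^k = \bar F_{x^l}$, i.e.
\[
\alpha_0 e^\alpha F_l + e^\alpha F_{0l} + (b_l)_0 = \alpha_{x^l} e^\alpha F + e^\alpha F_{x^l} + (b_i)_{x^l} y^i.
\]
Rearranging to collect the $F$-terms on the left and the $\beta$-terms on the right gives
\[
e^\alpha\big(F_{0l} - F_{x^l}\big) = e^\alpha\big(\alpha_{x^l} F - \alpha_0 F_l\big) + (b_i)_{x^l} y^i - (b_l)_0,
\]
which is exactly \eqref{Ham}. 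This is a biconditional throughout since Rapcsák's criterion is an if-and-only-if.

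The computation here is entirely routine; there is no real obstacle, only the bookkeeping of which quantities depend on $y$ and which do not, together with careful use of the abbreviations $\alpha_0 = \alpha_{x^k}y^k$, $F_{0l} = F_{x^k y^l}y^k$, and $(b_l)_0 = (b_l)_{x^k}y^k$. The one point to be slightly careful about is the term $\partial_{y^l}\big((b_i)_{x^k}y^i\big) = (b_l)_{x^k}$: this is correct precisely because $\beta$ is linear in $y$, so its mixed $x$–$y$ second derivative is the constant-in-$y$ coefficient matrix $(b_l)_{x^k}$. Everything else follows by substituting into \eqref{Rap} and transposing terms, so the lemma is immediate.
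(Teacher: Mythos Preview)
Your proof is correct and follows essentially the same approach as the paper: both compute $\bar F_{x^k}$, pass to $\bar F_{x^ky^l}y^k$, and invoke Rapcs\'ak's criterion $\bar F_{x^ky^l}y^k=\bar F_{x^l}$ to obtain \eqref{Ham} by rearrangement. Your version is simply more explicit about the intermediate steps and the bookkeeping of $y$-dependence than the paper's terse listing of derivatives.
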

\begin{proof}
The following hold
\begin{eqnarray*}
&&\bar{F}=e^\alpha F+\beta,\\
&&\bar{F}_{x^k}=\alpha_{x^k}e^\alpha F+e^\alpha F_{x^k}+(b_i)x^ky^i,\\
&&\bar{F}_0=\alpha_0e^\alpha F+e^\alpha F_0+(b_i)_0y^i,\\
&&\bar{F}_{0l}=\alpha_0e^\alpha F_l+e^\alpha F_{0}+(b_l)_0.
\end{eqnarray*}
This completes the proof.
\end{proof}
\bigskip

By using the Lemma \ref{0},  we are going to prove the following.

\begin{prop}\label{lemb}
Let  $F=\sqrt{A^{2/m}+B}$ be an generalized $m$-th root Finsler
metric on an open subset $U\subset \mathbb{R}^n$, where $A$ is
irreducible, $m>4$ and $B\neq 0$. Suppose that $\bar{F}=e^\alpha
F+\beta$ be conformal $\beta$-change of $F$ where
$\beta=b_i(x)y^i,\alpha=\alpha(x)$. In that case, if $\bar{F}$ is
projectively flat metric then $F$ reduces to a Berwald metric.
\end{prop}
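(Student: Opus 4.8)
The plan is to feed the generalized $m$-th root structure into the Hamel--Rapcs\'ak criterion of Lemma \ref{0} and then apply the paper's rational/irrational dichotomy twice, at two different levels of irrationality. First I would substitute $F=\sqrt{\Upsilon}$, where $\Upsilon:=A^{2/m}+B$, into the identity of Lemma \ref{0}, using
\[
F_l=\frac{\Upsilon_l}{2\sqrt{\Upsilon}},\qquad F_{x^l}=\frac{\Upsilon_{x^l}}{2\sqrt{\Upsilon}},\qquad
F_{0l}=\frac{2\Upsilon\Upsilon_{0l}-\Upsilon_0\Upsilon_l}{4\Upsilon^{3/2}},
\]
together with the explicit expressions for $\Upsilon_l,\Upsilon_{0l},\Upsilon_{x^l}$ recorded before Theorem \ref{mainthma}. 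After clearing the factor $4\Upsilon^{3/2}$ the projective flatness of $\bar F$ becomes, for every index $l$,
\[
e^{\alpha}\,\Xi=4\big(\beta_{x^l}-\beta_{0l}\big)\,\Upsilon^{3/2},\qquad
\Xi:=2\Upsilon\Upsilon_{0l}-\Upsilon_0\Upsilon_l-2\Upsilon\Upsilon_{x^l}-4\alpha_{x^l}\Upsilon^{2}+2\alpha_0\Upsilon\Upsilon_l .
\]
Here $\beta_{x^l}-\beta_{0l}=\big((b_i)_{x^l}-(b_l)_{x^i}\big)y^i$ is an honest polynomial in $y$, while $\Xi$, after multiplication by $A^{2}$, is a polynomial in $A^{1/m}$ with polynomial coefficients in $y$; no $\sqrt{\Upsilon}$ occurs in it.

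The \emph{first} separation exploits this. Since $A$ is irreducible, $t^{m}-A$ is irreducible over $\mathbb{R}(y)$, and since moreover $B\neq 0$, the element $\Upsilon=A^{2/m}+B$ is not a square in $\mathbb{R}(y)(A^{1/m})$ — this follows from a norm-and-degree argument (for $m$ odd, $N_{\mathbb{R}(y)(A^{1/m})/\mathbb{R}(y)}(\Upsilon)$ is of the shape $A^{2}\pm B^{m}$, which the irreducibility of $A$ and $m>4$ prevent from being a perfect square; the case $m$ even is similar). Hence $\sqrt{\Upsilon}\notin\mathbb{R}(y)(A^{1/m})$, so $\{1,\sqrt{\Upsilon}\}$ is a basis of a quadratic field extension and the displayed identity forces both coefficients to vanish: $\beta_{x^l}=\beta_{0l}$ for all $l$ (i.e.\ $db=0$) and $\Xi=0$. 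The \emph{second} separation is now applied to $\Xi=0$ inside $\mathbb{R}(y)(A^{1/m})$: irreducibility of $A$ makes $1,A^{1/m},\dots,A^{(m-1)/m}$ linearly independent over $\mathbb{R}(y)$, and since $m>4$ the $A^{1/m}$-exponents occurring in $A^{2}\Xi$ reduce modulo $m$ to the three distinct residues $0,2,4$. Splitting accordingly and clearing the common power of $A$, the residue-$4$ part reads
\[
mA\big(A_{0l}-A_{x^l}\big)=(m-1)A_lA_0+m^{2}\alpha_{x^l}A^{2}-m\alpha_0 AA_l ,
\]
so $A$ divides $(m-1)A_lA_0$; as $A$ is irreducible and $\deg_y A_l=m-1<m=\deg_y A$, we get $A\mid A_0$, i.e.\ $A_0=\bar\theta A$ for some $1$-form $\bar\theta=\bar\theta_l(x)y^l$. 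Feeding $A_0=\bar\theta A$ into the residue-$2$ part and repeating the divisibility argument — now using $B\neq0$ and that the relevant remainder has $y$-degree $3<m$ — yields $B_0=\frac{2}{m}\bar\theta B$, after which the residue-$0$ part holds automatically.

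It remains to read off the spray of $F$. From $A_0=\bar\theta A$ and $B_0=\frac{2}{m}\bar\theta B$ one gets $\Upsilon_0=\frac{2}{m}\bar\theta\,\Upsilon$; dividing $\Xi=0$ by $2\Upsilon$ and substituting this back gives the clean relation
\[
\Upsilon_{0l}-\Upsilon_{x^l}=\Big(\frac{\bar\theta}{m}-\alpha_0\Big)\Upsilon_l+2\alpha_{x^l}\Upsilon .
\]
Since $\Upsilon_l=(F^{2})_{y^l}=2y_l$ and $g^{il}y_l=y^i$, the geodesic coefficients $G^i=\frac14 g^{il}(\Upsilon_{0l}-\Upsilon_{x^l})$ take the form
\[
G^i=\frac12\Big(\frac{\bar\theta_l y^l}{m}-\alpha_0\Big)y^i+\frac12\,\Upsilon\, g^{il}\alpha_{x^l}.
\]
The first summand is quadratic in $y$. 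For the second I would invoke that the spray of a generalized $m$-th root metric is a rational function of $y$ (the analogue of Lemma \ref{lemG}), so the Berwald tensor ${\bf B}=\partial^{3}G^{i}/\partial y^{j}\partial y^{k}\partial y^{l}$ is rational; but the conformal-correction term $\frac12\Upsilon g^{il}\alpha_{x^l}$ involves the irrational inverse metric, so its contribution to ${\bf B}$ is either absent — which forces $\alpha_{x^l}\equiv 0$, whence $G^i=\frac12(\frac{\bar\theta_l y^l}{m}-\alpha_0)y^i$ is plainly quadratic — or irrational, which is impossible. In either case ${\bf B}=0$, i.e.\ $F$ is a Berwald metric.

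The two delicate steps are precisely the two invocations of algebraic independence: (i) showing rigorously that $A^{2/m}+B$ is not a square in $\mathbb{R}(y)(A^{1/m})$ when $A$ is irreducible and $B\neq0$ — a norm/degree computation in which $m>4$ also serves to rule out low-degree coincidences; and (ii) the final step, disentangling the conformal-correction term $\frac12\Upsilon g^{il}\alpha_{x^l}$ from the rational spray so as to conclude ${\bf B}=0$. I expect (ii) to be the real obstacle, the rest being the by-now routine ``rational $=$ irrational $\Rightarrow$ both sides vanish'' bookkeeping used repeatedly in this paper.
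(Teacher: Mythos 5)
Your overall strategy coincides with the paper's: reduce projective flatness of $\bar F$ to the Rapcs\'ak-type identity of Lemma \ref{0}, clear the $\Upsilon^{3/2}$ denominator, separate the resulting identity according to the ``irrationality type'' of each term, and use irreducibility of $A$ to extract $A_0=\bar\theta A$ and then $B_0\propto\bar\theta B$. Your two-stage separation (first splitting off the $\sqrt{\Upsilon}$-component over $\mathbb{R}(y)(A^{1/m})$, then splitting $\Xi=0$ by the residues $0,2,4$ of the $A^{1/m}$-exponents, which is where $m>4$ enters) is a more careful rendering of the paper's single appeal to Lemma \ref{lemp}, and your residue-$4$ and residue-$2$ equations match the paper's $\Psi=0$ and $\Phi=0$ up to the $\alpha_{x^l}$-terms. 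You are also right that the term $e^{\alpha}(\alpha_{x^l}F-\alpha_0F_l)$ becomes \emph{rational} after multiplication by $4\Upsilon^{3/2}$ and therefore belongs in $\Xi$; the paper instead writes it as $\alpha_{x^l}A^{1/m}-\frac1m\alpha_0A^{1/m-1}A_l$ inside the $\Upsilon^{3/2}$-block of $\Theta$, which is not consistent with $F=\sqrt{A^{2/m}+B}$. Your claim that $\sqrt{\Upsilon}\notin\mathbb{R}(y)(A^{1/m})$ is only sketched, but the paper's Lemma \ref{lemp} is likewise stated without proof, so on that point you are on comparable footing.

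The genuine gap is exactly where you predicted it: the last step. Your (correct) bookkeeping leaves
\[
G^i=\tfrac12\Big(\tfrac{\bar\theta}{m}-\alpha_0\Big)y^i+\tfrac12\,F^2 g^{il}\alpha_{x^l},
\]
and the dichotomy you offer --- ``the contribution of the second term to $\mathbf{B}$ is either absent or irrational'' --- does not close it. For a \emph{generalized} $m$-th root metric there is no analogue of Lemma \ref{lemG} in the paper, and the inverse metric $g^{il}$ already lives in $\mathbb{R}(y)(A^{1/m})$ rather than in $\mathbb{R}(y)$; both summands above are therefore a priori of the same algebraic type and no forced cancellation occurs. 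Moreover, even if one knew $G^i$ were rational in $y$, Berwaldness requires $G^i$ to be a quadratic \emph{polynomial}, which is strictly stronger. What is missing is an argument that $F^2g^{il}\alpha_{x^l}$ can be quadratic only if $\alpha_{x^l}=0$ (for non-Riemannian $F$), or a derivation of $\alpha_{x^l}=0$ directly from the residue-$0$ and residue-$2$ equations; your substitution in fact yields $B_{0l}-B_{x^l}=(\frac{\bar\theta}{m}-\alpha_0)B_l+2\alpha_{x^l}B$ and the analogous relation for $A^{2/m}$, which are consistent with $\alpha_{x^l}\neq0$, so the conformal term does not die on its own. Be aware that the paper's own proof never confronts this term --- it disappears there only because of the computational slip noted above --- so filling this hole would require an argument that neither you nor the authors have supplied. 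A smaller, cosmetic issue: your assertion that the residue-$0$ equation ``holds automatically'' after the substitutions is unverified, though it is not needed for the conclusion.
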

\begin{proof}
By Lemma \ref{0}, we get
\[
F_{x^{l}}=\frac{2A^{2/m}A_{x^{l}}+mAB_{x^{l}}}{2mA\sqrt{A^{\frac{2}{m}}+B}},\]
Then we have
\begin{eqnarray*}
F_{x^{k}y^{l}}y^k=\!\!\!\!&&\!\!\!\! (A^{\frac{2}{m}}+B)^{-1/2}\Big[\frac{1}{4}(\frac{2A^{2/m}A_0}{mA}+B_0)
(\frac{2A^{2/m}A_l}{mA}+B_l)(A^{\frac{2}{m}}+B)^{-1}
\\ \!\!\!\!&+&\!\!\!\! \frac{1}{2}(\frac{4A^{2/m}A_0A_l}{m^2A^2}+\frac{2A^{2/m}A_{0l}}{mA}-\frac{2A^{2/m}A_0A_l}{mA^2}+B_{0l})\Big].
\end{eqnarray*}
Thus
\begin{eqnarray*}
F_{0l}-F_{x^l}\!\!\!\!&=&\!\!\!\! e^\alpha\frac{(A^{\frac{2}{m}}+B)^{-\frac{3}{2}}}{m^2A^2}\Big[A^{\frac{4}{m}}
(mAA_l\alpha_0+(1-m)A_lA_0+mAA_{0l}-mAA_{x^l})\\
&& +A^{\frac{2}{m}}(mAA_lB\alpha_0+\frac{1}{2}m^2A^2B_l\alpha_0+(2-m)A_lA_0B+mAA_{0l}B)\\
&&+\frac{1}{2}mA^{\frac{2}{m}+1}(mAB_{0l}-A_0B_l-A_lB_0-A_{x^l}B-mAB_{x^l})\\
&& +\frac{1}{2}m^2A^2(BB_l\alpha_0+B_{0l}B-\frac{1}{2}B_0B_l-BB_{x^l})\Big].
\end{eqnarray*}
By (\ref{Ham}), we obtain the following:
\[
\Phi A^{\frac{2}{m}}+\Psi A^{\frac{4}{m}}+\Theta=0,
\]
where
\begin{eqnarray*}
\Phi\!\!\!\!&=&\!\!\!\!-\frac{mA}{2}\Big[A_0B_l+B_oA_l+2B(A_{x^{l}}-A_l\alpha_0-A_{0l})+mA(B_{x^{l}}-B_l\alpha_0-B_{0l})\Big]\\
&-&(m-2)A_0A_lB,\\
\Psi\!\!\!\!&=&\!\!\!\!mA(A_{0l}+A_l\alpha_0-A_{x^{l}})-(m-1)A_0A_l,\\
\Theta\!\!\!\!&=&\!\!\!\!\frac{1}{4}m^2A^2\Big[2BB_l\alpha_0-2B_{0l}B+B_0B_l+2B_{x^{l}}B\Big],\\
\!\!\!\!&+&\!\!\!\!m^2A^2(A^{\frac{2}{m}}+B)^\frac{3}{2}e^{-2\alpha}\Big[(b_l)_{0}-(b_i)_{x^{l}}y^i
+e^\alpha(\alpha_{x^l}A^{\frac{1}{m}}-\frac{1}{m}\alpha_0A^{\frac{1}{m}-1}A_l)\Big].
\end{eqnarray*}
By Lemma \ref{lemp},  we have
\begin{eqnarray}
&&\Phi=0,\label{n}\\
&&\Psi=0,\label{m}\\
&&\Theta=0.
\end{eqnarray}
By (\ref{m}),  it follows that
\be
mA(A_l\alpha_0+A_{0l}-A_{x^{l}})=(m-1)A_0A_l.\label{w}
\ee
Then
irreducibility of $A$ and $deg(A_l )= m-1<deg(A)$ implies that $A_0$
is divisible by $A$. This means that, there is a 1-form
$\theta=\theta_l y^l$ on $U$ such that, \be\label{q} A_0=2m A\theta.
\ee Substituting (\ref{q}) into (\ref{w}), yields \be\label{e}
A_{0l}=A_{x^{l}}-A_l\alpha_0+2(m-1)\theta A_l. \ee Plugging
(\ref{q}) and (\ref{e}) into (\ref{n}), we get \be mA(2\theta
B_l-B_{0l}-B_l\alpha_0+B_{x^{l}})=A_l(4B\theta-B_0).\label{r} \ee
Clearly, the right side of (\ref{r}) is divisible by $A$. Since $A$
is irreducible, deg($A_l$)  and
 deg($ 2\theta B-\frac{1}{2}B$) are both less than deg$(A)$, then we have
\be\label{17}
B_0=4 B\theta.
\ee
By (\ref{q}) and (\ref{17}), we get the spray coefficients
$G^i = Py^i $ with $P=\theta$. Thus $F$ is a Berwald metric.
\end{proof}

\bigskip

The Riemann curvature ${\bf K}_y= R^i_{\ k}  dx^k \otimes \pxi|_x : T_xM \to T_xM$ is a family of linear maps on tangent spaces,
defined by
\[
R^i_{\ k} = 2 {\pa G^i\over \pa x^k}-y^j{\pa^2 G^i\over \pa x^j\pa y^k}
+2G^j {\pa^2 G^i \over \pa y^j \pa y^k} - {\pa G^i \over \pa y^j}{\pa G^j \over \pa y^k}.  \label{Riemann}
\]
For a flag $P={\rm span}\{y, u\} \subset T_xM$ with flagpole $y$, the flag curvature ${\bf K}={\bf K}(P, y)$ is defined by
\[
{\bf K}(P, y):= {\g_y (u, {\bf K}_y(u))
\over \g_y(y, y) \g_y(u,u)
-\g_y(y, u)^2 },
\]
When $F$ is Riemannian, ${\bf K}={\bf  K}(P)$ is independent of $y\in P$, which  is just the sectional curvature of $P$ in Riemannian geometry. We say that  a Finsler metric $F$ is   of scalar curvature if for any $y\in T_xM$, the flag curvature ${\bf K}= {\bf K}(x, y)$ is a scalar function on the slit tangent bundle $TM_0$.  One of the important problems in Finsler geometry is to characterize Finsler manifolds of scalar flag curvature \cite{NST}\cite{NT}. If ${\bf K}=constant $, then the Finsler metric $F$ is said to be of constant flag  curvature.

\bigskip

\noindent {\it\bf Proof of Theorem \ref{mainthmb}}: By Proposition \ref{lemb},   $F$ is  a Berwald metric. On the other hand, according to  Numata's Theorem every Berwald metric of non-zero scalar flag curvature ${\bf K}$ must be Riemaniann. This is contradicts with our assumption. Then ${\bf K}=0$, and in this case $F$ reduces to a locally Minkowskian metric.
\qed

\bigskip
\noindent
Akbar Tayebi and Ali Nankali\\
Department of Mathematics, Faculty  of Science\\
University of Qom \\
Qom. Iran\\
Email:\ akbar.tayebi@gmail.com\\
Email:\ ali.nankali2327@yahoo.com
\bigskip

\noindent
Esmaeil Peyghan\\
Department of Mathematics, Faculty  of Science\\
Arak University\\
Arak 38156-8-8349.  Iran\\
Email: epeyghan@gmail.com


\begin{thebibliography}{888}
\bibitem{am} S.-I. Amari, {\it Differential-Geometrical Methods in Statistics}, Springer Lecture Notes in Statistics,  Springer-Verlag, 1985.
\bibitem{AIM} P. L. Antonelli, R. Ingarden and M. Matsumoto, {\it The Theory of  Sprays and Finsler
Spaces with Applications in Physics and Biology}, Kluwer Acad. publ., Netherlands (1993).
 \bibitem{BM} S.  B\'{a}cs\'{o} and M. Matsumoto, {\it On Finsler spaces of Douglas type, A generalization of notion of Berwald space}, Publ. Math. Debrecen. \textbf{51}(1997), 385-406.
\bibitem{Mangalia} V. Balan and N. Brinzei, {\it Einstein equations for $(h,v)$-Berwald-Mo\'{o}r relativistic models}, Balkan. J. Geom. Appl. {\bf 11}(2)(2006), 20-26.
\bibitem{CS} X. Chen and Z. Shen, {\it  On Douglas Metrics},  Publ. Math. Debrecen.  \textbf{66}(2005), 503-512.
\bibitem{HI} H. Hashiguchi  and Y. Ichijyo, {\it Randers spaces with rectilinear geodetics}, Rep. Fac. Sci.
Kagoshima Univ. (Math. Phys. Chem.)  {\bf 13}(1980), 33-40.
\bibitem{shLi1} B. Li and Z. Shen, {\it On projectively flat fourth root metrics}, Canad. Math. Bull. {\bf 55}(2012), 138-145.
\bibitem{Mat} M. Matsumoto, {\it On Finsler spaces with Randers metric and special forms of
important tensors}, J. Math Kyoto Univ. {\bf 14}(1975), 477-498.
\bibitem{MatShim} M. Matsumoto and H. Shimada, {\it On Finsler spaces with 1-form metric. II. Berwald-Mo\'{o}r's metric $L=\left( y^{1}y^{2}...y^{n}\right) ^{1/n}$}, Tensor N. S. \textbf{32}(1978), 275-278.
\bibitem{NST} B. Najafi, Z. Shen and A. Tayebi, {\it On Finsler metrics of scalar curvature with some non-Riemannian curvature properties},  Geom. Dedicata,  {\bf 131}(2008), 87-97.
\bibitem{NT} B. Najafi and A. Tayebi, {\it  Finsler Metrics of scalar flag curvature and projective invariants},  Balkan. J. Geom. Appl, {\bf 15}(2010), 90-99.
\bibitem{shen} Z. Shen, {\it Riemann-Finsler geometry with applications to information geometry}, Chin. Ann. Math. {\bf 27}(2006), 73-94.
\bibitem{Shib} C. Shibata, {\it On invariant tensors of\  \ $\beta$-changes of Finsler metrics}, J. Math. Kyoto Univ. {\bf 24}(1984), 163-188.
\bibitem{Shim} H. Shimada, {\it On Finsler spaces with metric }$L=%
\sqrt[m]{a_{i_{1}i_{2}...i_{m}}y^{i_{1}}y^{i_{2}}...y^{i_{m}}},$ Tensor, N.S., {\bf 33}(1979), 365-372.
\bibitem{TN1} A. Tayebi and B. Najafi, {\it  On $m$-th root Finsler metrics}, J. Geom. Phys. {\bf 61}(2011), 1479-1484.
\bibitem{TN2} A. Tayebi and B. Najafi, {\it   On $m$-th root metrics with special curvature properties}, C. R. Acad. Sci. Paris, Ser. I, {\bf 349}(2011),  691-693.
\bibitem{TP1}	 A. Tayebi and E. Peyghan,  {\it Finsler metrics with special Landsberg curvature},  Iran. J. Sci. Tech, Trans A,  Vol. 33,  No.  A3, (2009), 241-248.
\bibitem{TP2}	 A. Tayebi and E. Peyghan,  {\it On a special class of Finsler metrics}, Iran. J. Sci. Tech, Trans A, Vol. 33, No. A2, (2009), 179-186.
\bibitem{TPSa}	 A. Tayebi, E. Peyghan and H. Sadeghi, {\it On a class of locally dually flat Finsler metrics with isotropic S-curvature}, Iran. J. Sci. Tech, Trans A,   {\bf 36}(2012). accepted.
\bibitem{TPS}	 A. Tayebi, E. Peyghan and M. Shahbazi, {\it  On generalized $m$-th root Finsler metrics}, Linear Algebra. Appl,   {\bf 437}(2012), 675-683.
\bibitem{TR}	 A. Tayebi and M. Rafie Rad, {\it  S-curvature of isotropic Berwald metrics}, Science in China, Series A: Mathematics.  {\bf 51}(2008), 2198-2204.
\bibitem{YY} Y. Yu and Y. You, {\it On Einstein $m$-th root metrics}, Diff. Geom. Appl. {\bf 28}(2010) 290-294.


\end{thebibliography}
\end{document}